\newcommand{\qs}{{\mathbb Q}}  
\newcommand{\cs}{{\mathbb C}} 
\newcommand{\rs}{{\mathbb R}} 
   \renewcommand\Im{\operatorname{Im}}
\newcommand{\la}{\lambda}
\newcommand{\bs}{\bar s}
\newcommand{\bS}{\bar S}
\newcommand{\by}{\bar y}
\newcommand{\E}{\mathbb{E}}
\DeclareMathOperator{\Var}{Var}
\newcommand{\cC}{\mathcal C}
\newcommand{\cD}{\mathcal D}
\newcommand{\cW}{\mathcal W}
\newcommand{\cP}{\mathcal P}
\newcommand{\cT}{\mathcal T}
\newtheorem{Theorem}{Theorem}
\newtheorem{Proposition}[Theorem]{Proposition}
\newtheorem{Lemma}[Theorem]{Lemma}
\newcommand{\beq}{\begin{equation}}
\newcommand{\eeq}{\end{equation}}
\newcommand{\gf}{generating function}
\newcommand{\gfs}{generating functions}
\newcommand{\saws}{self-avoiding walks}
\def\emm#1,{{\em #1}}
 \def\NN{{\sf N}}
 \def\EE{{\sf E}}
 \def\SS{{\sf S}}
 \def\WW{{\sf W}}
\def\section{\@startsection{section}{1}%
 \z@{.7\linespacing\@plus\linespacing}{.5\linespacing}%
 {\normalfont\bfseries\scshape\centering}}
\def\subsection{\@startsection{subsection}{2}%
  \z@{.5\linespacing\@plus\linespacing}{.5\linespacing}%
  {\normalfont\bfseries\scshape}}
\def\subsubsection{\@startsection{subsubsection}{3}%
 \z@{.5\linespacing\@plus\linespacing}{-.5em}
  {\normalfont\bfseries\itshape}}
\def\qed{$\hfill{\vrule height 3pt width 5pt depth 2pt}$}
\begin{document}
\title
[On the importance sampling of self-avoiding walks]
{On the importance sampling of self-avoiding walks}

\author[M. Bousquet-M\'elou]{Mireille Bousquet-M\'elou}
%
\address{MBM: CNRS, LaBRI, Universit\'e Bordeaux 1, 
351 cours de la Lib\'eration, 33405 Talence, France}
\email{mireille.bousquet@labri.fr}

\thanks{}

\keywords{Sampling -- Self-avoiding walks}

\begin{abstract}
In a 1976 paper published in \emm Science,, Knuth presented an
algorithm to sample  (non-uniform) self-avoiding walks crossing a
square of side $k$. From this sample, he constructed an estimator for the
number of such walks. The quality of this estimator is directly
related to the (relative) variance of a certain random variable $X_k$. From his
experiments, Knuth suspected that this variance was extremely large (so that
the estimator would not be very efficient). But how large?
For the analogous Rosenbluth algorithm, which samples \emm unconfined,
self-avoiding walks of length $n$,  the variance of the corresponding
estimator is believed to be exponential in $n$.

A few years ago, Bassetti and Diaconis showed that, for a 
sampler \emm \`a la Knuth, that  generates walks 
crossing a $k\times k$ square and consisting of
North and East steps,
the relative variance is only $O(\sqrt k)$. In this note we take one step
further and show that, for  walks consisting of North, South and
East steps, the relative variance jumps to
$2^{k(k+1)}/(k+1)^{2k}$.
This is quasi-exponential in the average length of the
 walks, which is of order $k^2$. We also obtain partial
results for  general self-avoiding walks crossing a square, suggesting 
that the relative variance could  be exponential in $k^2$ (which is again  the average length of these walks).

Knuth's algorithm  is a basic example of  a widely used technique
called \emm sequential importance
sampling,.  The present paper, following Bassetti and Diaconis' paper,
is one of very few 
examples where the variance of the estimator can be found.
\end{abstract}

\date{August 24th, 2012}
\maketitle


\section{Introduction}
A \emm self-avoiding walk, (SAW) on a graph is a walk that never visits
 the same vertex twice.  Let $\cW_k$ be the set of SAWs on a $k \times
k$ square grid, going from the South-West vertex  
to the North-East vertex (Figure~\ref{fig:10by10-SAW}). In his paper ``\emm Coping with
finiteness,''~\cite{knuth,knuth-selected}, Knuth 
described the following algorithm to generate a (non-uniform) random
walk of $\cW_k$: start from the South-West
corner, and at each time, choose with equal probability (which can be $1/3$, $1/2$ or 1) one of the
\emm eligible, steps. A step is \emm eligible, if, once appended to
the current walk, it gives a self-avoiding walk 
that can be extended so as to end at the North-East corner.
In this way the walk is never trapped and
the algorithm always succeeds\footnote{We describe in  Section~\ref{sec:trap}
  how to detect algorithmically when   a new step traps the walk.}.
Figure~\ref{fig:2by2-SAW}
shows the probabilities of the 12 possible walks when $k=2$. Two bigger
examples ($k=10$, $k=100$) are shown in Figure~\ref{fig:10by10-SAW}.
 This procedure is a basic example of  a widely used technique called \emm sequential importance
sampling,~\cite{bassetti-diaconis,chen,diaconis-graphs}. It is also a
variant, for walks confined to a square, of the Rosenbluth algorithm
that generates unconfined SAWs~\cite{rosenbluth}.

\begin{figure}[htb]
\includegraphics[scale=1]{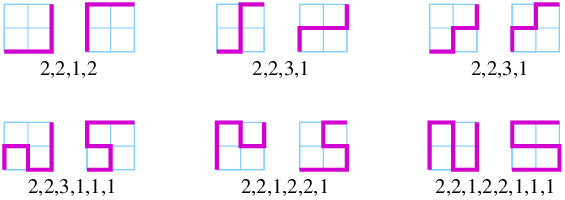}
\caption{The  12 self-avoiding walks crossing the $2\times 2$
  square. For each of them, we give the 
  sequence $1/p_1, 1/p_2, \ldots$ where $p_i$ is the probability of
  the $i$th step.  The probability of the walk is thus the reciprocal of the
  product of the terms in the list. Two walks have probability $1/8$,
  six have probability $1/12$, and four have probability $1/16$. Two
  walks that differ by a diagonal symmetry have the same probability.}
\label{fig:2by2-SAW}
\end{figure}

\begin{figure}[b]
\includegraphics[scale=0.4]{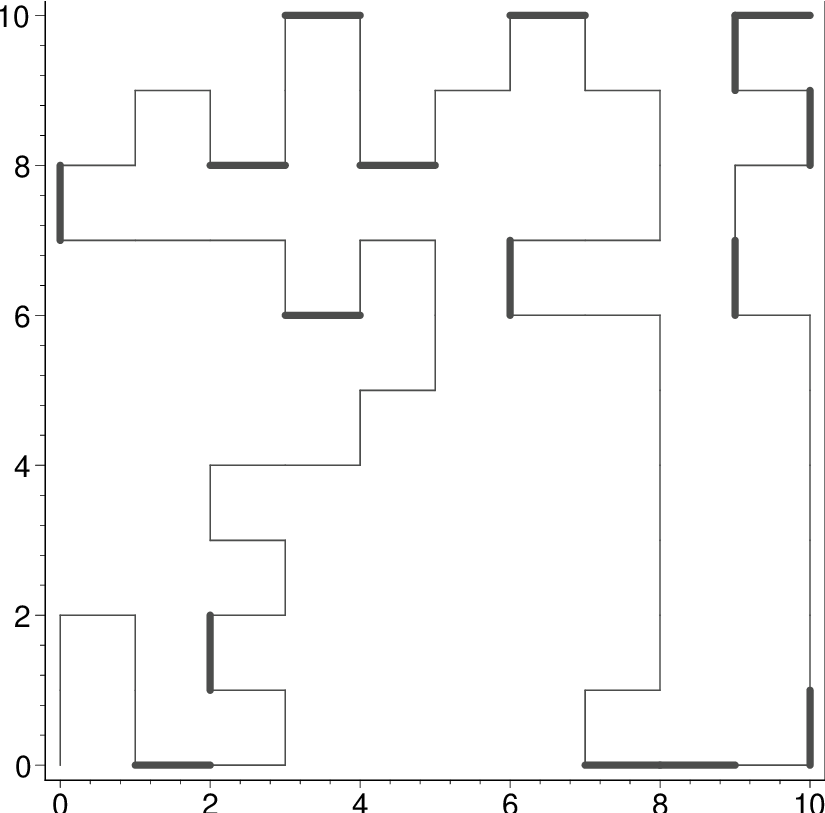}
\hskip 20mm \includegraphics[scale=0.4]{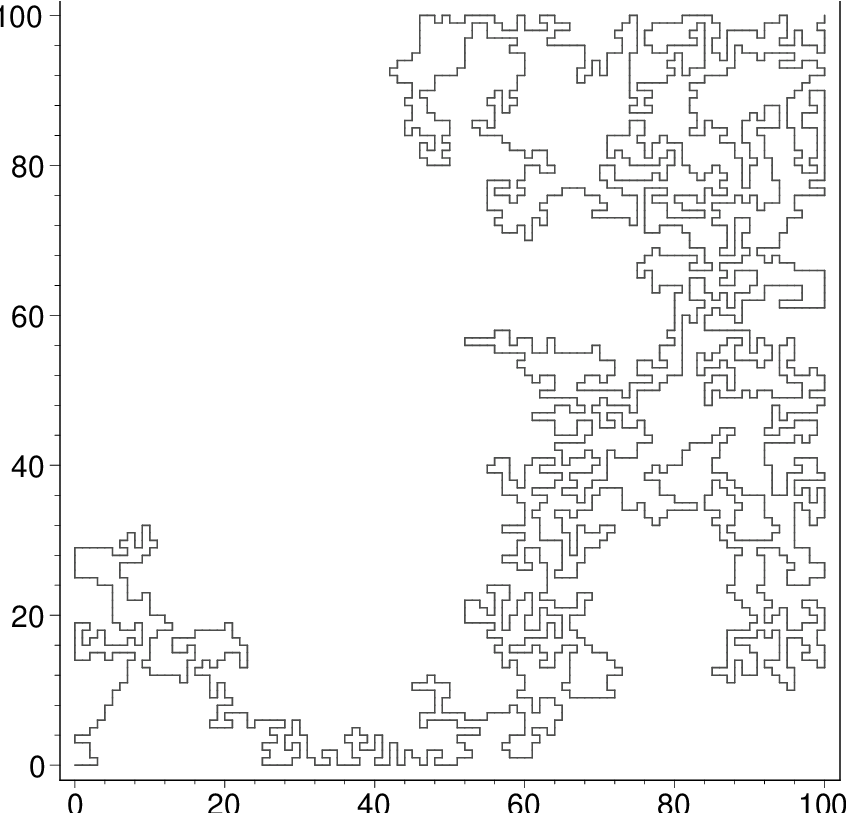}
\caption{Left: A SAW crossing the $10\times 10$
  square. The thick steps have probability 1. That is, each of them is the
  only eligible step at the time when it is taken. Right: A SAW crossing the
  $100\times 100$   square, obtained via Knuth's algorithm.}
\label{fig:10by10-SAW}
\end{figure}

Denote by $p(w)$ be the probability to draw the walk $w \in
\cW_k$. Consider the random variable $X_{k}=1/p(w)$, where $w$ is a
random walk of 
$\cW_{k}$ drawn according to the distribution $p(\cdot)$. Clearly,
$$
\E(X_{k})= \sum_{w \in  \cW_{k}} p(w) \frac 1 {p(w)} = |\cW_{k}|.
$$
Hence one can estimate the number of SAWs crossing a $k\times k$ square
by generating $N$ walks $w^{(1)}, \ldots, w^{(N)}$ of $\cW_k$, and
computing
\beq\label{eq:estimator}
\frac 1 N \sum_{i=1}^N \frac 1 {p(w^{(i)})}.
\eeq
By generating ``several thousand'' walks for $k=10$, Knuth obtained 
$$
|\cW_{10}|\simeq (1.6\pm 0.3) \times 10^{24},
$$
which is quite good compared to the now known exact value,
$1,568,758,030,464,750,013,214,100$ (see~\cite{mbm-crossing,knuth-selected}).
We have reproduced Knuth's experiment,  and found, with a first group of
10,000 walks, the estimate $1.78  \times 10^{24}$, and with a second
group, the  estimate $1.38  \times 10^{24}$.
As observed by Knuth, the
values $\frac 1 {p(w^{(i)})}$  vary a lot (a small sample of 10 walks gave us values ranging from $10^{11}$ to
$10^{24}$),  and one may suspect that the variance of $X_{k}$
is probably  much larger than $\E(X_k)^2$, or, in other words, that
the \emm relative variance,
$$
\Var\left( \frac {X_k}{\E(X_k)}\right)
$$
is large.
Note that 
$$\Var(X_{k})=\E(X_{k}^2)-\E(X_{k})^2= \sum_{w \in  \cW_{k}}
\frac 1 {p(w)} - |\cW_k|^2.  
$$
Also, observe that the variance of the estimator~\eqref{eq:estimator} is
$\Var(X_k)/N$, so that $\Var(X_k)$ is a measure of the quality of this
estimator.  Let us mention that, even though sequential
importance sampling is widely used, no general bounds on the variance
of the estimators are available. 


\medskip
Knuth's observation led Bassetti and Diaconis to study  a simpler
algorithm, in which a step, to be eligible, has to go North (\NN) or
East (\EE)~\cite{bassetti-diaconis}. The resulting walk is called a \emm directed, walk, or 
\NN\EE-walk. Each step has probability $1/2$ unless it follows the
North or East side of the square --- in which case it has
probability~1. Denote by $\cD_k$ the set of directed walks of $\cW_k$,
and by $p(w)$ the probability to generate the directed walk
$w$ with this new algorithm. Define the random variable $X_k=1/p(w)$ as above. Then
$$
\E(X_k)= |\cD_k|={2k \choose k} \sim \frac{4^k}{\sqrt {\pi k}}.
$$
Of course, since $|\cD_k|$ is known exactly, there is no point in
using importance sampling to estimate this cardinality --- but it is
interesting to know that the
variance of the estimator can be determined, as follows.  By the above argument,  a walk of $\cD_k$ that hits the North
or East side of the square  for the first time at time $k+i$ has probability
$1/2^{k+i}$. Since there are $2 {k+i-1 \choose i}$ such walks,
$$
\E(X_k^2)= \sum_{w \in  \cD_{k}}\frac 1 {p(w)} =\sum_{i=0}^{k-1}2^{k+i+1} {k+i-1 \choose i}.
$$
The corresponding \gf\ is
\beq\label{gf-directed}
\sum_{k\ge 1}\E(X_k^2)x^k= \frac{2\,x}{1+2x} \left( {\frac {3}{\sqrt {1-16\,x}}}-1 \right)  ,
\eeq
and an elementary singularity analysis~\cite[Chap.~VI]{flajolet-sedgewick} gives 
$$
\E(X_k^2)\sim \frac{16^k}{3\sqrt{\pi k}},
$$
which is roughly $\sqrt k$ times larger than
$$\E(X_k)^2\sim \frac{16^k}{ {\pi k}}.$$


\medskip
In this note, we first take one more step in the direction of the general problem
by declaring that South steps are also eligible. The resulting walks are
 \emm partially directed, walks, or 
\NN\EE\SS-walks.  The probabilities of the 9 walks obtained when $k=2$
are shown in Figure~\ref{fig:2by2}. Of course, these probabilities are
not the same as those obtained from Knuth's original algorithm.

\begin{figure}[htb]
\scalebox{1}{\input{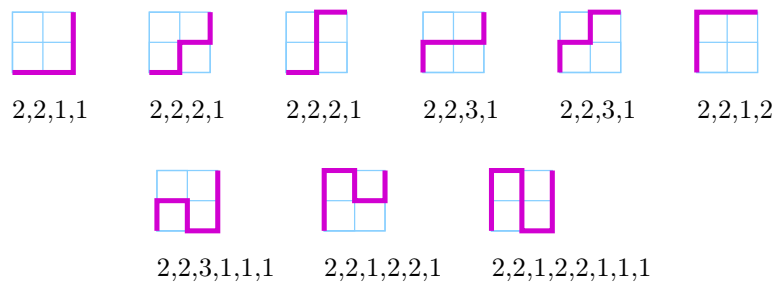}}
\caption{The  nine \NN\EE\SS-walks crossing a $2\times 2$ square, with the reciprocals of the
  probabilities of their steps.}
\label{fig:2by2}
\end{figure}

We will prove that one outcome of this increased generality is that the
ratio between $\E(X_k^2)$ and $\E(X_k)^2$ becomes much larger:
$$
\E(X_k^2) \sim \frac 3 2\, 2^{k(k+1)}  \quad \hbox{while} \quad \E(X_k)^2=(k+1)^{2k}.
$$
We will also prove that the average length of a (uniform) \NN\EE\SS-walk
confined to the $k\times k$ square is quadratic in $k$, so that the
variance of $X_k$ is roughly exponential in the  length, as predicted
for SAWs generated by the Rosenbluth algorithm~\cite{batoulis-kremer}.

Since the $x/y$ symmetry
is lost with partially directed walks, 
it is natural to
generalize the original question by 
enclosing  walks in a rectangle $R$ of height $k$ and width
$\ell$. 
Thus, let $\cP_{k,\ell}$ be 
the set of partially directed  
walks that start from the South-West corner of $R$ and end
at the North-East corner.  A walk of
$\cP_{k,\ell}$ contains exactly $\ell$ East steps, and choosing the
heights of these steps determines the walk completely. Hence the
number of walks in 
$\cP_{k,\ell}$ is $(k+1)^\ell$. 

Thus,  defining the random variable $X_{k,\ell}=1/p(w)$ as above, we have
\beq\label{EXkl}
\E(X_{k,\ell})=
|\cP_{k,\ell}|=(k+1)^\ell.
\eeq
We will prove that, if $k, \ell \rightarrow \infty$ in
such a way $\ell=o(2^k)$, then  
$$
\Var(X_{k,\ell}) \sim \E(X_{k,\ell}^2) \sim \frac 3 2\, 2^{(k+1)\ell},
$$
so that the relative variance satisfies
$$
\Var\left(\frac{X_{k,\ell}}{\E(X_{k,\ell})}\right) \sim \frac 3 2\,
\left(\frac{2^{k+1}}{(k+1)^2}\right)^\ell.
$$
This results thus extends the very short list of examples where the variance of an
importance sampler can be rigorously
established~\cite{bassetti-diaconis}.  Moreover, the average length of
a (uniform) walk of $\cP_{k,\ell}$ is shown to be approximately
$k\ell/3$, so   that the variance is again exponential in the length,
as expected for other similar samplers.

\medskip
The paper is organized as follows. In Section~\ref{sec:exact}, we
obtain  an explicit expression for the \gf\ of the 
numbers $\E(X_{k,\ell}^2)$ (the counterpart
of~\eqref{gf-directed}). In Section~\ref{sec:asympt}, we derive from
this expression the above asymptotic result. 
In Section~\ref{sec:knuth}, we go back to Knuth's sampler and prove that there exist two positive 
constants $\lambda$ and $\beta$ such that 
$$
\E(X_k)^{1/k^2} \rightarrow \lambda \quad \hbox{and} \quad \E(X_k^2)^{1/k^2} \rightarrow \beta.
$$ 
The former result has actually been known since 1978~\cite{abbott}.
Since a variance is non-negative,  $\beta  \ge
\lambda^2$. Upper (and lower) bounds on $\lambda$ have been obtained
in~\cite{mbm-crossing}, based on the determination of the numbers
$\E(X_k)=|\cW_k|$ for small values of $k$, and of related numbers
counting other configurations of \saws.  As shown in
Section~\ref{sec:knuth}, a similar study,
performed for the numbers $\E(X_k^2)$, might suffice to  prove that $\beta  >
\lambda^2$, so that the variance would be again exponential in $k^2$ (which
is known to be the average length of a uniform SAW crossing the $k\times
k$-square~\cite{madras}). 
We conclude with a few remarks and questions on the
importance sampling of self-avoiding walks not confined to a box.

\section{Exact results for \NN\EE\SS-walks}\label{sec:exact}
In this section, we first describe the probability $p(w)$ to obtain
the walk $w$ in terms
of the geometry of $w$ (Section~\ref{sec:proba}). This
description reduces the determination of the numbers $\E(X_{k,\ell}^2)$
to the enumeration of  \NN\EE\SS-walks  according to several
parameters, which we perform in Section~\ref{sec:enum}. 
\subsection{The probability $p(w)$}\label{sec:proba}
Let $w_0$ be a walk of $\cP_{k,\ell}$, written as a sequence of \NN,
\EE\ and \SS\ steps. Let $w$ be the prefix of $w_0$ that precedes the
last \EE\ step. That is, $w_0= w \EE \NN\cdots\NN$. By convention,
$w_0$ starts at height 0.
\begin{Lemma}\label{lem:proba}
The probability $p(w_0)$ to obtain $w_0$ via the importance sampling
algorithm satisfies
$$
\frac 1 {p(w_0)}= 2 \cdot 3^{h(w)} 2^{h_c(w)} 2^{v(w)} 1^{v_c(w)},
$$
where
\begin{itemize}
\item $h(w)$ is the number of horizontal steps of $w$ that lie neither
  at height $0$ nor at height $k$,
\item  $h_c(w)$ is the number of horizontal \emm contacts, of $w$,
  that is, horizontal steps  that lie   at height $0$ or $k$,
\item $v(w)$ is the number of vertical steps of $w$ that end neither
  at height $0$ nor at height $k$,
\item  $v_c(w)$ is the number of vertical \emm contacts, of $w$,
  that is, vertical steps  that end   at height $0$ or $k$.
\end{itemize}
\end{Lemma}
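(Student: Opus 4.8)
The plan is to express $1/p(w_0)$ as the product, over all vertices $v$ visited by $w_0$ except the final North-East corner, of the number $c(v)$ of steps that are eligible when the walk sits at $v$; by definition of the sampler $p(w_0)=\prod_v 1/c(v)$, so it suffices to (a) determine $c(v)$ at every vertex and (b) show that the resulting product equals the announced expression.

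For step (a) I would first record the rigid geometry of \NN\EE\SS-walks: between two consecutive \EE\ steps the walk moves vertically within a single column, and self-avoidance forces each such vertical run to be monotone (entirely \NN, or entirely \SS). A decision vertex $v=(x,y)$ is therefore classified by how it was entered --- by an \EE\ step (so $v$ begins a new column) or by a vertical step (so $v$ lies inside a vertical run) --- and by whether $x=\ell$. In each case $c(v)$ follows from the two eligibility requirements. The range and self-avoidance conditions are immediate; the delicate ingredient is the requirement that a step keep the walk \emph{extendable} to the corner, and I expect this to be the main obstacle. I would settle it by exhibiting an explicit completion for each admissible step, typically ``continue \EE\ to the last column, then go \NN\ to the top'', which is self-avoiding. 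This shows that an \EE\ step is eligible precisely when $x<\ell$, a \NN\ step precisely when $y<k$ and its target is unvisited, and an \SS\ step precisely when $y>0$, its target is unvisited, \emph{and} $x<\ell$ (in the last column a South step can never be completed without returning through $v$). Collecting cases gives, for a decision vertex with $x<\ell$: $c(v)=3$ if $v$ was entered by an \EE\ step at an interior height; $c(v)=2$ if $v$ was entered by an \EE\ step at a contact height or by a vertical step ending at an interior height; and $c(v)=1$ if $v$ was entered by a vertical step ending at a contact height. Moreover the origin has $c(v)=2$, and every decision vertex with $x=\ell$ --- that is, every vertex of the final forced run $\EE\NN\cdots\NN$ --- has $c(v)=1$.

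For step (b) the key observation is a shift: I would attribute the factor $c(v)$ not to the step leaving $v$ but to the step entering $v$, the origin (which has no entering step) being made to account for the global factor $2$. Under this convention the list above becomes a rule on the steps of $w_0$: an \EE\ step contributes $3$ or $2$ according as its height is interior or a contact, a vertical step contributes $2$ or $1$ according as its endpoint is interior or a contact, and the last \EE\ step together with every step following it contributes $1$. Because $w$ is exactly the part of $w_0$ preceding the last \EE\ step, multiplying these contributions gives $2\cdot 3^{h(w)}2^{h_c(w)}2^{v(w)}1^{v_c(w)}$, which is the claim. The two points to verify carefully are that $v\mapsto(\text{step entering }v)$ is a bijection from the decision vertices other than the origin onto the steps of $w_0$, and that the landing vertex of the last \EE\ step and all vertices beyond it indeed lie in the factor-$1$ regime --- which is precisely the last-column analysis carried out in step (a).
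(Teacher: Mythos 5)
Your proof is correct and takes essentially the same route as the paper's: the paper simply observes that the probability of step $s_{i+1}$ is determined by the direction and position (contact or not) of $s_i$, which is exactly your vertex count $c(v)$ shifted onto the entering step; you are, if anything, more explicit than the paper about why eligibility reduces to range and self-avoidance except for \SS\ steps in the last column. One harmless slip: the map $v\mapsto(\text{step entering }v)$ reaches the steps of $w_0$ \emph{except the last one} (the North-East corner is not a decision vertex), but that omitted step lies in the factor-$1$ regime, so the product is unaffected.
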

\begin{proof}
  Assume the walk $w_0$ has length $n$ and ends with exactly $j$ vertical steps. The probability of the first step is $1/2$, and the probability of
 each of  the $j$ final steps is 1. Let $s_i$ denote the $i$th
 step. Hence $w$ consists of the steps $s_1, \ldots, s_{n-j-1}$. For
  $1\le i<n-j$, the probability of $s_{i+1}$ depends on the direction and
  position of $s_{i}$:
  \begin{itemize}
  \item if $s_i$ is horizontal, but not a contact, then the
    probability of  $s_{i+1}$ is $1/3$,
 \item if $s_i$ is a horizontal contact, then the
    probability of  $s_{i+1}$ is $1/2$,
\item if $s_i$ is vertical, but not a contact, then the
    probability of  $s_{i+1}$ is $1/2$,
\item if $s_i$ is a vertical contact, then the
    probability of  $s_{i+1}$ is $1$.
  \end{itemize}
The lemma follows.
\end{proof}

\subsection{Enumeration of \NN\EE\SS-walks in a strip of
  fixed height}
\label{sec:enum}

Recall the expression~\eqref{EXkl} of the numbers $\E(X_{k,\ell})$. 
 For $k$ (the height of the rectangle) fixed,  the \gf\  of the numbers 
$\E(X_{k,\ell})^2$ 
is  \emm rational,:
\beq\label{rat-expectation}
\sum_{\ell \ge 1} \E(X_{k,\ell})^2 x^\ell = \sum_{\ell \ge
  1}(k+1)^{2\ell} x^\ell= \frac{(k+1)^2x}{1-(k+1)^2x}.
\eeq

We will determine  the variance of $X_{k,\ell}$ by describing the
\gf\ of the numbers $\E(X_{k,\ell}^2)$, which is also rational when $k$ is fixed.

\begin{Proposition}\label{prop:rat}
  For any fixed height $k$, the \gf\ $M_k(x)$ of the numbers $\E(X_{k,\ell}^2)$
  is a rational series:
$$
M_k(x):=\sum_{\ell \ge 1} \E(X_{k,\ell}^2) x^\ell = 2x \,\frac{N_k}{G_k},
$$
where 
$N_k$ and $G_k$ are polynomials in $x$ satisfying the same recurrence relation:
$$
N_{k}= (5+9x )N_{k-2} -4N_{k-4},
$$
(and similarly for $G_k$), with initial conditions 
$$
\begin{array}{lllllllll}
N_{1}&=&2,&& G_1&=&1-4x,\\
N_2&=&  5+3x,&&G_2&=&1 -9x-6x^2,\\
N_3&=& 11+9x,&&G_3&=& 1-19x-18x^2,\\
N_4&=& 23+54x+27x^2,&\hskip 8mm &G_4&=& 1-36x-99x^2-54x^3.
\end{array}
$$
\end{Proposition}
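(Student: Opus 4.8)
The plan is to turn the statement into a weighted enumeration of \NN\EE\SS-walks and to solve it with a transfer matrix indexed by the height. Since $X_{k,\ell}$ takes the value $1/p(w_0)$ with probability $p(w_0)$, we have $\E(X_{k,\ell}^2)=\sum_{w_0\in\cP_{k,\ell}}1/p(w_0)$, so Lemma~\ref{lem:proba} gives $\E(X_{k,\ell}^2)=\sum_{w_0}2\cdot 3^{h(w)}2^{h_c(w)}2^{v(w)}$. A walk of $\cP_{k,\ell}$ is encoded by the heights $a_1,\dots,a_\ell\in\{0,\dots,k\}$ of its $\ell$ East steps: it climbs from $0$ to $a_1$, steps East, runs vertically from $a_1$ to $a_2$, and so on, ending with an East step at height $a_\ell$ followed by a North run to $k$. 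Writing $E(a)=3$ for $0<a<k$ and $E(a)=2$ for $a\in\{0,k\}$ for the weight of an East step at height $a$, and $V(a,b)$ for the weight $2^{\#\{\text{interior heights visited}\}}$ of the vertical run from $a$ to $b$, the quantity above factorises: the prefix $w$ carries the factor $E(a_i)$ of every East step but the last and the factor $V(a_{i-1},a_i)$ of every vertical run but the final one (with $a_0=0$). Letting $\Phi_a(x)$ be the generating function of the weighted walk stopped just before an East step at height $a$ (with $x$ marking East steps), we obtain the linear system
\[
\Phi_a=2\,V(0,a)+x\sum_{b=0}^{k}E(b)\,V(b,a)\,\Phi_b,\qquad 0\le a\le k,
\]
together with $M_k(x)=x\sum_{a=0}^{k}\Phi_a$. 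This is a finite system with polynomial entries, so $M_k$ is rational with denominator $\det(I-T_k)$ where $(T_k)_{b,a}=xE(b)V(b,a)$; rationality is thus immediate, and it remains to identify $N_k$ and $G_k$.

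The key is to collapse this dense system to a short recurrence in the height. A direct check shows $V(a,b)=2^{|a-b|}$ for an interior endpoint $b$, modified by a factor $\tfrac12$ at the two walls, and $V(0,a)=2^{a}$ away from the top. The decisive identity is that $2^{|n|}$ is annihilated, up to a local term, by a second-order difference operator:
\[
2^{|n+1|}-\tfrac52\,2^{|n|}+2^{|n-1|}=\tfrac32\,\delta_{n,0}.
\]
Applying $\Phi_{a+1}-\tfrac52\Phi_a+\Phi_{a-1}$ to the system therefore annihilates both the source and the entire convolution, leaving only the local term $\tfrac32\,xE(a)\Phi_a=\tfrac{9x}2\Phi_a$ at interior $a$. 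Hence, for $2\le a\le k-2$,
\[
2\Phi_{a+1}-(5+9x)\Phi_a+2\Phi_{a-1}=0,
\]
which is exactly where the factor $5+9x$ of the recurrence originates. Thus $\Phi_a=A\tau_+^{a}+B\tau_-^{a}$ in the bulk, with $\tau_\pm$ the roots of $2\tau^2-(5+9x)\tau+2$ (so $\tau_+\tau_-=1$), and the constants $A,B$ are pinned down by the four boundary equations at $a\in\{0,1,k-1,k\}$, where the corrected values of $V$ and $E$ come into play.

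Substituting the bulk solution into the boundary equations and into $M_k=x\sum_a\Phi_a$ shows that the height $k$ enters only through the powers $\tau_\pm^{k}$. Because both walls $0$ and $k$ are identical contacts, $T_k$ commutes with the reflection $a\mapsto k-a$, and the resulting even/odd decoupling is what makes the recurrence advance two steps at a time. Expressing numerator and denominator through the symmetric functions of $\tau_\pm$ and simplifying, one obtains for both $N_k$ and $G_k$ the single recurrence $P_k=(5+9x)P_{k-2}-4P_{k-4}$, and matching against the values at $k\le4$, computed directly from the small systems, fixes the initial polynomials. I expect the boundary analysis to be the main obstacle: the corrections to $V$ and $E$ at heights $0,1,k-1,k$ destroy the clean convolution, and one must track them carefully to verify that the two walls' contributions recombine into exactly the stated $N_k,G_k$ and into a single step-two recurrence with the coefficient $-4$, rather than into a messier relation.
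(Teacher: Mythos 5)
Your reduction via Lemma~\ref{lem:proba} to a weighted enumeration over the heights $a_1,\dots,a_\ell$ of the East steps is exactly the paper's starting point, and your factorisation of the weight is correct (it reproduces the nine values of Figure~\ref{fig:2by2}). From there you genuinely diverge: you set up a dense transfer matrix indexed by height and collapse it with the second-difference identity $2^{|n+1|}-\frac52\,2^{|n|}+2^{|n-1|}=\frac32\delta_{n,0}$, whereas the paper keeps the final height as a catalytic variable $s$, derives a functional equation by adding one East step and one vertical run at a time (Lemma~\ref{lem:rec}), and solves it by the kernel method. The two mechanisms are equivalent at bottom: your characteristic equation $2\tau^2-(5+9x)\tau+2=0$ is precisely the specialised kernel $S+1/S=(5+9x)/2$ of~\eqref{ker-s}, and the discrete-Laplacian trick is the transfer-matrix analogue of cancelling the kernel. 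Your route explains transparently where $5+9x$, the product $4$ of the (rescaled) roots, and the step-two structure come from; the paper's route handles all boundary effects in one stroke by substituting $s=S$ and $s=1/S$.

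The gap is the one you flag yourself, and it is not a routine afterthought: the boundary analysis at $a\in\{0,1,k-1,k\}$ is where the proposition is actually proved. Until you solve the four boundary equations for $A$, $B$, $\Phi_0$, $\Phi_k$ and substitute into $x\sum_a\Phi_a$, you have not shown that the numerator and denominator take the form $\alpha\,(2\tau_+)^{\lfloor k/2\rfloor}+\beta\,(2\tau_-)^{\lfloor k/2\rfloor}$ with $\alpha,\beta$ independent of $k$ and no common factor; that is the only thing entitling you to the recurrence $P_k=(5+9x)P_{k-2}-4P_{k-4}$, and ``matching against $k\le4$'' fixes initial conditions only once the recurrence itself is established. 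Note also that the denominator is emphatically not $\det(I-T_k)$, which has degree roughly $k+1$ in $x$, while $G_k$ has degree $\lceil(k+1)/2\rceil$; a substantial cancellation must be exhibited, which the paper does through the explicit closed forms~\eqref{G-N-S}. As written, your argument establishes rationality and identifies the characteristic roots, but not the stated polynomials $N_k$, $G_k$.
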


\smallskip
\noindent{\bf Example.} For $k=2$, 
$$
\sum_{\ell \ge 1} \E(X_{2,\ell}^2) x^\ell = 2x \,\frac{5+3x}{1-9x-6x^2}=
10x+96x^2+O(x^3).
$$
Figure~\ref{fig:2by2} allows us  to check that the coefficient of $x^2$ is correct:
$$
96=4+8+ 8 + 12 +12 + 8+ 12+ 16 +16.
$$
 The \gf\ of the variances is
$$
\sum_{\ell \ge 1} \Var(X_{2,\ell}) x^\ell = 2x
\,\frac{5+3x}{1-9x-6x^2}-\frac {9x} {1-9x}.
$$
Observe that the radius of the first fraction is smaller than the radius
of the second fraction. As $\ell\rightarrow \infty$,
$$
\E(X_{2,\ell}^2)\sim  \mu^\ell
$$
(up to a multiplicative constant) with $\mu= (9+\sqrt{105})/2\simeq
9.62$, while $\E(X_{2,\ell})^2= 9^\ell$. \qed

\medskip
We now want to prove Proposition~\ref{prop:rat}. Recall that
$$
\E(X_{k,\ell}^2)=\sum_{w_0\in \cP_{k,\ell}} \frac 1 {p(w_0)}.
$$
The expression of $p(w_0)$ given in  Lemma~\ref{lem:proba} leads us to
study a purely  enumerative problem.
For $k$ fixed, let $\cT_k$ be the set  of \NN\EE\SS-walks $w$ that start at
height $0$ and are confined to the strip $0\le y \le k$. We wish to
count these walks by the parameters $h(w)$, $h_c(w)$, $v(w)$ and
$v_c(w)$. So, let 
$$
\sum_{w\in \cT_k} x^{h(w)} y^{v(w)} a^{h_c(w)} b^{v_c(w)}
$$
be the associated \gf. This series is easily seen to be rational
(it can be determined using a transfer-matrix
approach~\cite[Sec.~4.7]{stanley-vol1}, or equivalently a finite-state
automaton~\cite[p.~362]{flajolet-sedgewick}), and  
there are several ways to determine it. We present here what we believe
to be the most direct one. It relies on a recursive description of the
walks of $\cT_k$, where we add at each time an \EE\ step and a sequence
of vertical steps. This approach requires to take into account an
additional parameter, namely the height $f(w)$ of the final point of the
walk $w$. Hence our series finally involve 5 variables:
$$
T_k(s)\equiv T_k(x,y,a,b;s)=\sum_{w\in \cT_k} x^{h(w)} y^{v(w)}
a^{h_c(w)} b^{v_c(w)}s^{f(w)}. 
$$
We will  denote by $\tilde \cT_k$ the subset of $\cT_k$ formed of
walks that do not  end at height 0 or $k$, and by $\tilde T_k(s)\equiv
\tilde T_k(x,y,a,b;s)$ the corresponding \gf.
Accordingly,
$$
T_k(s)=\sum_{i=0}^k T_{k,i} s^i= T_{k,0}+\tilde T_k (s) + s^k T_{k,k},
$$
where $T_{k,i}$ is the series in $x$, $y$, $a$ and $b$ counting walks
of $\cT_k$ ending at height $i$.
By Lemma~\ref{lem:proba}, 
\begin{eqnarray}
M_k(x)=  \sum_{\ell\ge 1} \E(X_{k,\ell}^2)x^\ell &=&\sum_{\ell\ge 1} \sum_{w_0\in
  \cP_{k,\ell} } \frac 1{p(w_0)}x^\ell\nonumber\\
&=&2\sum_{w\in \cT_k}3^{h(w)} 2^{h_c(w)}2^{v(w)} x^{1+h(w)+h_c(w)}\nonumber
\\
&=&2x\,T_k(3x,2,2x,1;1).\label{prob-enum}
\end{eqnarray}
{\bf Remark.} The series $T_{k,k}$ has already been determined
  in the case $x=y=a=b=t$,
 using the same approach as here~\cite[Prop~3]{bacher-mbm}. The derivation
is more involved here because we keep track of four parameters in the
enumeration, and because we are interested in $T_k(1)$ rather than $T_{k,k}$.
\begin{Lemma}\label{lem:rec}
  The series $\tilde T_k(s)$, $T_{k,0}$ and $T_{k,k}$ satisfy the following system of equations:
  \begin{multline*}
    \left(1-\frac x{1-ys} - \frac {xy\bs}{1-y\bs}\right) \tilde T_k(s)\\
= \frac{ys-(ys)^k}{1-ys} -\frac {x (ys)^k}{1-ys} \tilde T_k(1/y)
- \frac x{1-y\bs} \tilde T_k(y)+ aT_{k,0}  \frac{ys-(ys)^k}{1-ys}
+aT_{k,k} \frac{ys^{k-1} -y^k}{1-y\bs},
  \end{multline*}
  \begin{eqnarray*}
    T_{k,0}&=& 1+bx\by \tilde T_k(y)+aT_{k,0}+aby^{k-1}T_{k,k},
\\
T_{k,k}&=& by^{k-1} + bxy^{k-1} \tilde T_k(1/y)+ a by^{k-1}T_{k,0} + aT_{k,k},
  \end{eqnarray*}
with $\bs=1/s$ and $\by=1/y$.
\end{Lemma}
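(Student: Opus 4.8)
The plan is to set up a recursive decomposition of the walks of $\cT_k$ based on their \emph{last} East step and translate it directly into the announced functional system. First I would record the structural fact that, by self-avoidance, every maximal run of vertical steps of a partially directed walk is \emph{monotone}: between two consecutive \EE\ steps (and after the last one) the walk stays in a single column, so it goes either only \NN\ or only \SS. Consequently a walk $w\in\cT_k$ is either a single North run starting at height $0$ (the case with no \EE\ step), or it factors uniquely as $w=w'\,\EE\,\rho$, where $w'\in\cT_k$ ends at some height $i$, the East step sits at height $i$, and $\rho$ is a possibly empty monotone vertical run ending at the final height $f(w)$.

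Next I would translate this decomposition into weights. The East step contributes $a$ if $i\in\{0,k\}$ (a horizontal contact) and $x$ otherwise. A North run from height $i$ to an interior height $j$ (with $i<j<k$) contributes $y^{\,j-i}s^{\,j}$, while a North run reaching the top, $j=k$, contributes $y^{\,k-i-1}b\,s^{\,k}$, the factor $b$ recording the single vertical contact; symmetric rules hold for South runs, with $b$ appearing exactly when the run hits height $0$. Splitting $T_k(s)=T_{k,0}+\tilde T_k(s)+s^kT_{k,k}$ according to whether $i=0$, $0<i<k$, or $i=k$, and summing the run contributions as geometric series in $ys$ (North) and in $y\bs$ (South), I would obtain the bulk terms $\frac{x}{1-ys}\tilde T_k(s)$ and $\frac{xy\bs}{1-y\bs}\tilde T_k(s)$ coming from an interior East step, the initial-run term $\frac{ys-(ys)^k}{1-ys}$, and the two terms $aT_{k,0}\frac{ys-(ys)^k}{1-ys}$ and $aT_{k,k}\frac{ys^{k-1}-y^k}{1-y\bs}$ coming from an East step sitting on the boundary.

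The delicate point is the \emph{truncation at the two boundaries}, which is where the substituted series $\tilde T_k(1/y)$ and $\tilde T_k(y)$ enter. The full geometric series $\frac{x}{1-ys}\tilde T_k(s)$ over-counts North runs that would overshoot height $k$; the portion landing at height $j\ge k$ must be removed from $\tilde T_k(s)$, since those walks either end with a top contact (hence belong to $T_{k,k}$) or leave the strip. Resumming $x\sum_i T_{k,i}\sum_{j\ge k}y^{\,j-i}s^{\,j}=\frac{x(ys)^k}{1-ys}\sum_i T_{k,i}y^{-i}$ produces exactly $\frac{x(ys)^k}{1-ys}\tilde T_k(1/y)$, and the analogous removal of the South overshoot below height $0$ produces $\frac{x}{1-y\bs}\tilde T_k(y)$. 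Collecting the bulk kernel on the left then yields the first equation. For the two scalar equations I would read off the walks ending \emph{at} heights $0$ and $k$: a walk $w'$ ending at an interior height, extended by an interior East step and a South (resp.\ North) run to the boundary, contributes $bx\by\,\tilde T_k(y)$ (resp.\ $bxy^{k-1}\tilde T_k(1/y)$), while boundary-to-boundary and stay-put configurations give $aT_{k,0}$, $aby^{k-1}T_{k,k}$ and their counterparts for $T_{k,k}$, together with the base terms ($1$ for the empty walk ending at $0$, and $by^{k-1}$ for the initial North run reaching $k$).

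The main obstacle is purely one of careful boundary bookkeeping: one must consistently decide that the contact weight $b$ and the special height-$0$ or height-$k$ landings are accounted for only in $T_{k,0}$ and $T_{k,k}$, and that the interior series $\tilde T_k(s)$ records the \emph{geometric continuation} with uniform weight $y$, so that the overshoot subtractions resum cleanly into the evaluations at $s=1/y$ and $s=y$. Once these conventions are fixed, each of the three equations follows by collecting the contributions above; the only genuine computations are the elementary geometric summations, which I would not expand here.
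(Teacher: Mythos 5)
Your proof is correct and follows essentially the same route as the paper: a decomposition of the walks of $\cT_k$ by their last \EE\ step into an initial North run, a boundary case ($i\in\{0,k\}$) and an interior case, with geometric resummation of the final monotone vertical run producing the kernel on the left and the substituted series $\tilde T_k(y)$, $\tilde T_k(1/y)$ on the right. The only cosmetic difference is that you phrase the boundary truncation as ``full geometric series minus overshoot'' where the paper sums the finite geometric series directly; these are the same computation.
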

\begin{proof}
  We construct the walks of $\tilde \cT_k$ recursively, by adding at each time
  a horizontal step followed by a sequence of vertical steps.  

We partition the set $\tilde \cT_k$ into three disjoint subsets, illustrated in
Figure~\ref{fig:decomp}.
\begin{itemize}
\item 
The first subset  consists of walks with no \EE\ step. These walks
 consist of $i$ North steps, with $1\le i <k$. 
 Their \gf\ is  
$$
\sum_{i=1}^{k-1} (ys)^i= \frac{ys-(ys)^k}{1-ys} .
$$
  \item 
The second subset consists of
walks in which the last \EE\ is followed by a (possibly empty)
sequence of \NN\ steps.  We denote by   $i$ the height of the last
\EE\ step, and distinguish the cases $i=0$ and $0< i <k$. The \gf\ of
this subset of $\tilde \cT_k$ reads  
\begin{align*}
 aT_{k,0} \sum_{j=1}^{k-1} (sy)^j
+ x\sum_{i=1}^{k-1}\left(  T_{k,i} s^i \sum_{j=0}^{k-i-1} (ys)^j \right)
=
 aT_{k,0}  \frac{ys-(ys)^k}{1-ys}
+ x\sum_{i=1}^{k-1}\left(  T_{k,i} s^i \frac{1-(ys)^{k-i}}{1-ys}\right)\\
=
aT_{k,0}  \frac{ys-(ys)^k}{1-ys}
+\frac x{1-ys} \left( \tilde T_k(s)-(ys)^k \tilde T_k(1/y)\right).
\end{align*}
   \item 
The third subset  consists of
walks in which the last \EE\  step is followed by a non-empty sequence
of \SS\ steps. We  denote by $i$ the height of the last
\EE\ step, and distinguish the cases $i=k$  and $0<i<k$.  The \gf\ of
this subset  of $\tilde \cT_k$ reads
\begin{align*}
 a s^k T_{k,k} \sum_{j=1}^{k-1} (y\bs)^j+
x\sum_{i=1}^{k-1}\left(  T_{k,i} s^i \sum_{j=1}^{i-1}(y\bs)^j\right)
=
 a T_{k,k} \frac{ys^{k-1} -y^k}{1-y\bs}
+x\sum_{i=1}^{k-1}\left(  T_{k,i} s^i \frac{y\bs-(y\bs)^i}{1-y\bs}\right)\\
= a T_{k,k} \frac{ys^{k-1} -y^k}{1-y\bs}
+
\frac x{1-y\bs} \left(y\bs \tilde T_k(s)-\tilde T_k(y) \right).
\end{align*}
 \end{itemize}
Adding the three contributions gives the series $\tilde T_k(s)$ and
establishes the first equation of the lemma.

The equations for $T_{k,0}$ and $T_{k,k}$ are obtained in a similar fashion.
\end{proof}

\begin{figure}[htb]
\scalebox{0.8}{\input{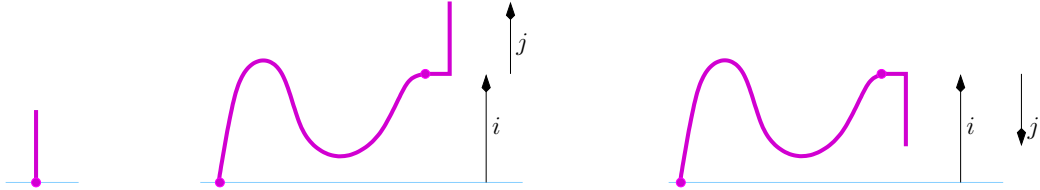}}
\caption{Recursive construction of bounded \NN\EE\SS-walks.}
\label{fig:decomp}
\end{figure}

We now solve the functional equations of Lemma~\ref{lem:rec}. The key
tool is the  \emm kernel method, (see
e.g.~\cite{hexacephale,bousquet-petkovsek-1,prodinger}).
\begin{Proposition}\label{prop:sol-enum}
Let $k\ge 1$. The series $T_k(1)\equiv T_k(x,y,a,b;1)$ counting
\NN\EE\SS-walks confined to a strip of height $k$ is 
$$
T_k(1)= \frac{N_k}{G_k},
$$
where $N_k$ and $G_k$ are polynomials in $x$, $y$, $a$ and $b$ satisfying
 the same recurrence relation
$$
N_{k}= (1-x+y^2 (1+x))N_{k-2} -y^2N_{k-4},
$$
(and similarly for $G_k$) with initial conditions
$$
\begin{array}{llllllll}
N_{-1}&=&(1-x-xy)(b-y)/y^2,
&\hskip 5mm \ &G_0&=& ( x -1) ab/y- ( x+1 )(a-1),
\\
N_0&=& (b-xb+xy)/y, &&G_1&=&1-a-ab,
\\
N_1&=& 1+b, &&G_2&=&(1-x)(1-a)- ( x+1 ) yab,
\\
N_2&=& 1-x+y+by(1+x), &&G_3&=&(1-x-xy)(1-a)-yab ( x+y+xy).
\end{array}
$$
Equivalently,
\beq\label{Tk1-sol}
T_k(1)
=
\frac 1 {P(\bS)S^k+P(S)} \left(Q(\bS)S^k+Q(S)
+(1-y^2)\frac{S^k-S}{S-1} \right),
\eeq
where $S$ is the unique formal power series\footnote{The other
  solution is $1/S$, and its expansion in $x$ and $y$ involves
  negative powers of $y$.} in $x$ and $y$ satisfying
\beq \label{ker}
S+\frac 1 S = (1+x)y +(1-x)\by,
\eeq
with $\by=1/y$, $\bS=1/S$, 
\beq\label{PQ-def}
P(s)= 1-a+aby-s(ab+y-ay) \quad\hbox{and } \quad  Q(s)=1-by+(b-y)s.
\eeq
\end{Proposition}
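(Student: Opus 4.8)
The plan is to read the first equation of Lemma~\ref{lem:rec} as a functional equation for $\tilde T_k(s)$ with catalytic variable $s$, and to solve it by the kernel method; the two scalar equations for $T_{k,0}$ and $T_{k,k}$ will then close the system. First I would clear the denominators: multiplying the first equation by $(1-ys)(1-y\bs)$ replaces its left-hand side by $\cK(s)\tilde T_k(s)$, where the kernel is the Laurent polynomial
\[
\cK(s)=(1-x)+(1+x)y^2-y(s+\bs).
\]
Thus $\cK(s)=0$ is equivalent to $y(s+\bs)=(1-x)+(1+x)y^2$, which is exactly~\eqref{ker}; the associated quadratic $ys^2-\bigl[(1-x)+(1+x)y^2\bigr]s+y=0$ has the two solutions $S$ and $\bS=1/S$, and one checks that $S=\frac{y}{1-x}+O(y^3)$ is the unique root that is a formal power series in $x$ and $y$.

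The structural fact that makes the method work is that the walks counted by $\tilde T_k$ are confined to the strip $0\le y\le k$, so that $\tilde T_k(s)=\sum_{i=1}^{k-1}T_{k,i}s^i$ is a genuine \emph{polynomial} in $s$ of degree at most $k-1$. Consequently the cleared identity $\cK(s)\tilde T_k(s)=\mathrm{RHS}(s)$ is an identity of Laurent polynomials in $s$ that may be specialized at \emph{both} roots $s=S$ and $s=\bS$ (working in the field of formal Laurent series in $y$), the left-hand side vanishing each time. The right-hand side of Lemma~\ref{lem:rec} involves only the four non-local unknowns $\tilde T_k(y)$, $\tilde T_k(1/y)$, $T_{k,0}$ and $T_{k,k}$; hence the two specializations yield two linear relations among them, with coefficients explicit in $S$ or $\bS$, in $y$, $a$, $b$, and in the powers $(yS)^k$, $(y\bS)^k$. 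Together with the two scalar equations of Lemma~\ref{lem:rec}, this gives a $4\times4$ linear system.

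I would then solve this system (Cramer's rule), which determines the four unknowns; substituting $s=1$ in the cleared equation recovers $\tilde T_k(1)$, and $T_k(1)=T_{k,0}+\tilde T_k(1)+T_{k,k}$ assembles the answer. The $S$- and $\bS$-contributions should recombine through the polynomials $P$ and $Q$ of~\eqref{PQ-def}, collapsing the result to the compact form~\eqref{Tk1-sol}. This elimination and simplification is the heart of the argument and the step most prone to error: one must check that every spurious denominator cancels and that the $S$- and $\bS$-terms reassemble into precisely the stated shape.

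Finally I would pass from~\eqref{Tk1-sol} to the rational expression $N_k/G_k$ and its three-term recurrence. The point is that $S$ satisfies the quadratic~\eqref{ker}, so $S\bS=1$, $S+\bS=(1+x)y+(1-x)\by$, and $S^k=(S+\bS)S^{k-1}-S^{k-2}$. Clearing the $S$-dependence of the numerator and denominator of~\eqref{Tk1-sol} into symmetric functions of $S$ and $\bS$ turns them into polynomials $N_k$ and $G_k$ in $x,y,a,b$. Both then satisfy the \emph{same} recurrence $N_k=(1-x+(1+x)y^2)N_{k-2}-y^2N_{k-4}$, whose shift-by-two characteristic equation $\mu^2-(1-x+(1+x)y^2)\mu+y^2=0$ has precisely the roots $\mu=yS$ and $\mu=y\bS$, their sum and product being $y(S+\bS)=1-x+(1+x)y^2$ and $y^2S\bS=y^2$. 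The initial data $N_{-1},\dots,G_3$ follow by evaluating the solution at the smallest values of $k$. The subtlety to control here is the normalization: $N_k$ and $G_k$ are a priori Laurent polynomials in $y$, and numerator and denominator must be cleared by the same power of $y$ to obtain the stated representatives.
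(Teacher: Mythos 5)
Your proposal is correct and follows essentially the same route as the paper: the kernel method applied to the first equation of Lemma~\ref{lem:rec}, with the correctly identified kernel $(1-x)+(1+x)y^2-y(s+1/s)$ and its roots $S$ and $\bar S$ (substitutable because $\tilde T_k(s)$ is a polynomial in $s$), followed by setting $s=1$ and symmetrizing in $S\leftrightarrow 1/S$ to get the rational form $N_k/G_k$ and the step-two recurrence from the characteristic roots $yS$ and $y\bar S$. The only cosmetic difference is that you solve a $4\times 4$ system directly, whereas the paper first eliminates $\tilde T_k(y)$ and $\tilde T_k(1/y)$ via the two scalar equations and only then cancels the kernel, leaving a $2\times 2$ system in $T_{k,0}$ and $T_{k,k}$.
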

The reason why we give the expressions of $N_{-1}$ and $N_0$,
rather than $N_3$ and $N_4$, is that they are more compact. The same
reason explains why we give $G_0$ rather than $G_4$. It is of course easy
to compute  $N_3$, $N_4$ and $G_4$, and Proposition~\ref{prop:rat} then
follows at once, using~\eqref{prob-enum}.  We hope that using the same
notation $N_k$, $G_k$, for the enumeration problem of
Proposition~\ref{prop:sol-enum} and its
specialization of Proposition~\ref{prop:rat} will not create any confusion.

\begin{proof}
  First, we use the last two equations of Lemma~\ref{lem:rec} to express
  $\tilde T_k(y)$ and  $\tilde T_k(1/y)$ as linear combinations  of $T_{k,0}$
  and $T_{k,k}$.  Then, in the first equation of the lemma, we replace
  $\tilde T_k(y)$ and  $\tilde T_k(1/y)$ by their expressions in terms of  $T_{k,0}$
  and $T_{k,k}$. The  left-hand side is unchanged, and the right-hand
  side now involves only  two unknown 
  series, namely $T_{k,0}$ and $T_{k,k}$:
  \begin{multline}
    \left( 1-{\frac {x  }{1-ys}-{\frac {xy\bs}{1-y\bs}}} \right)\tilde T_k(s)
\\
=
 \frac {ys}{ 1-ys }
+\frac {y}{b \left( 1-y\bs \right) }
+ \left({\frac {ays}{1-ys}}+ 
{\frac {y \left( a -1\right) }{b \left( 1-y\bs \right) }} \right) 
T_{k,0}
+{s}^{k} \left( {\frac {y \left( a-1 \right) }{ b\left( 1-ys \right) 
}}+{\frac {ya\bs}{1-y\bs}} \right) T_{k,k}.
\label{eq-func-2}
  \end{multline}
The \emm
kernel, of this equation is the coefficient of $\tilde T_k(s)$.
It vanishes when $s=S$ and $s=\bS:=1/S$, where $S$ is defined in the
proposition.  Since $\tilde T_k(s)$ is a polynomial in $s$, and $S$
and $\bS$ are Laurent series in $x$ and $y$ with finitely many
monomials with negative exponents, the series
$\tilde T_k(S)$ and
$\tilde T_k(\bS)$ are well-defined. Replacing $s$ by $S$ or $\bS$ in the
above
equation cancels the left-hand side, and hence the right-hand side. One
thus obtains two linear equations between $T_{k,0}$ and $T_{k,k}$, which
involve the series $S$. Solving them gives  expressions of $T_{k,0}$
and $T_{k,k}$ in terms of $S$  (the expression of $T_{k,k}$ is given
in~\eqref{Tkk-S} below).
By setting $s=1$ in~\eqref{eq-func-2}, one then expresses $\tilde
T_k(1)$ in terms of $S$, and finally $T_k(1)= T_{k,0}+T_{k,k}+\tilde
T_k(1)$. This gives~\eqref{Tk1-sol}.

\medskip
Observe that the expression~\eqref{Tk1-sol} is unchanged if we replace $S$ by
$\bS=1/S$. In particular, it can be written as a symmetric rational
function in $S$ and $\bS$ (with coefficients in $\qs(a,b,y)$). Since
$S$ and $\bS$ are the two roots 
of~\eqref{ker}, their symmetric functions are rational functions of
$x$ and $y$. This
implies that $T_k(1)$ is a rational series in $x$, $y$, $a$ and
$b$.  However,  the  
denominator of~\eqref{Tk1-sol}, namely $P(\bS)S^k+P(S)$, is not unchanged
when $S\mapsto 1/S$. But let us define 
the series   $N_k$ and $G_k$ as follows:
\begin{eqnarray}
  G_{2k}&=&\frac{y^k}{1-y^2}\left( P(\bS) S^k +P(S)\bS^k\right),\nonumber
\\
G_{2k+1}&=& \frac{y^k}{(1-y)(1+S)}\left( P(\bS) S^{k+1}
  +P(S)\bS^k\right), \label{G-N-S}
\\
N_{2k}&=&\frac{y^k}{1-y^2}\left( Q(\bS) S^k +Q(S)\bS^k+(1-y^2)
  \frac{S^{k}-\bS^{k-1}}{S-1}\right),\nonumber
\\
N_{2k+1}&=&\frac{y^k}{(1-y)(1+S)}\left( Q(\bS) S^{k+1} +Q(S)\bS^k+(1-y^2) \frac{S^{k}-\bS^k}{1-\bS}\right).\nonumber
\end{eqnarray}
Then  it is easy to check that~\eqref{Tk1-sol} can be rewritten as
$T_k(1)= N_k/G_k$. Moreover,  the series $N_k$ and $G_k$ are unchanged
when  $S\mapsto 1/S$, and hence, by the same argument as above, they are
rational functions of $x$, $y$, $a$ and $b$.  More precisely, each of the
sequences $G_{2k}$, $G_{2k+1}$, $N_{2k}$ and $N_{2k+1}$ is of the form
$y^k(\alpha S^k+\beta \bS^k)$, where $S$ and $\bS$ are the two roots
of~\eqref{ker}. Hence each  sequence satisfies the
recurrence relation
$$
u_{k}= (1-x+y^2 (1+x))u_{k-1} -y^2u_{k-2}.
$$
One easily determines the initial values for each sequence. This
yields the description of $N_k$ and $G_k$ given in the
proposition. From this description, it is clear that $N_k$ and $G_k$
are polynomials, as soon as $k\ge 1$.
 \end{proof}

\noindent
{\bf Remarks} \\
{\bf 1. Denominators.} The series $T_{k,i}$, counting walks ending at height $i$, are also rational, but with a denominator that is a proper multiple of the denominator of
$T_k(1)= \sum_{i=0}^k T_{k,i}$. For instance,
\beq\label{Tkk-S}
T_{k,k}= \frac{b(y^2-1)(\bS-S)S^k}{(P(\bS)S^k-P(S))(P(\bS)S^k+P(S))},
\eeq
or, in terms of polynomials,
\beq\label{Tkk-pol}
 T_{k,k}= \frac{by^{k-1}}{F_k}, 
\eeq
where $F_k$ is defined by
 the  recurrence relation
$$
  F_{k}= (1-x+(1+x)y^2 )F_{k-1} -y^2F_{k-2},
$$
with the initial conditions 
$$
F_1=(1-a-ab)(1-a+ab)
\quad \hbox{and} \quad 
F_2= (1-a+bya)((1-x)(1-a)- ( x+1 ) yab).
$$
It is not hard to prove that $G_k$, the denominator of $T_k(1)$, is a
divisor of $F_k$. The 
simplification that occurs in the denominator when summing the series
$T_{k,i}$ over $i$ has  recently been explained combinatorially, for
slightly different walk models, by Bacher~\cite{bacher-sym}. 

\smallskip
\noindent
{\bf 2. Average length.}
The series $T_{k,k}(tx,t,tx,t)$ counts \NN\EE\SS-walks crossing a strip of height $k$
 according to the length (variable $t$) and the width (variable $x$). In
particular, the case  $t=1$ of~\eqref{Tkk-pol} reads
$T_{k,k}(x,1,x,1)=1/(1-(k+1)x)$, as justified
combinatorially in the introduction. In order to determine the average
length $|w|$ of a uniform  \NN\EE\SS-walk $w$ crossing a $k\times \ell$ 
rectangle, we differentiate
$T_{k,k}(tx,t,tx,t)$ with respect
to $t$, and then set $t=1$. This gives
$$
\left. \frac{\partial}{\partial t}\left(T_{k,k}(tx,t,tx,t)\right)\right| _{t=1}
= \sum_{\ell \ge 0} x^\ell \sum_{w\in \cP_{k,\ell}}|w| 
= {\frac {k}{1-x \left( 1+k \right) }}
+{\frac {x \left( 1+k \right) 
 \left( 1+k \left( k+2 \right) x/3 \right) }{ \left( 1-x \left( 1
+k \right)  \right) ^{2}}},
$$
so that the average length is
\beq\label{av-le}
{\frac { \left( {k}^{2}+5\,k+3 \right) \ell}{3(1+k)}}+{\frac {k
 \left( 2\,k+1 \right) }{3(1+k)}}= \frac{k\ell}{3}+O(k+\ell).
\eeq

\section{Asymptotic results for \NN\EE\SS-walks}\label{sec:asympt}
We now derive  asymptotic results from the previous section. Recall
that $\E(X_{k,\ell})= (k+1)^\ell$, so that the \gf\ of the numbers
$\E(X_{k,\ell})^2$ is given by~\eqref{rat-expectation} (for $k$
fixed), with radius of convergence $1/(k+1)^2$. The radius of convergence of 
the \gf\ of the numbers $\E(X_{k,\ell}^2)$
turns out to be exponentially smaller. Our study has analogies with
the study of the longest run in a binary
string~\cite[p.~308]{flajolet-sedgewick}, which also requires 
to analyze (explicit) rational functions depending on an integer $k$.
\begin{Proposition}\label{prop:asympt}
 Let $k\ge 1$. The series $M_k(x)$, given in
 Proposition~{\rm\ref{prop:rat}}, has a unique pole $\rho_k$ of  modulus
 less than $1/9$, satisfying 
\beq\label{rho-exp}
\rho_k = \frac 1{2^{k+1}}+\frac 9{2 \cdot 4^{k+1}}
-\frac{12k-23}{2\cdot 8^{k+1}} 
+\frac{36k^2-54k-87/8}{16^{k+1}}+O\left(\frac{k^3}{32^k}\right).
\eeq
As $x\rightarrow \rho_k^-$,
\beq\label{M-exp}
M_k(x) \sim \frac{\alpha_k}{1-x/\rho_k}
\eeq
with
\beq\label{residue-exp}
\alpha_k=
\frac 3 2-\frac{9 k-4}{2^{k+2}}+\frac{27k^2-48k+1}{2\cdot 4^{k+1}}
-\frac{81k^3-306k^2+75k+140}{2\cdot 8^{k+1}}+O\left(\frac{k^4}{16^k}\right).
\eeq
The second moment of $X_{k,\ell}$  satisfies, uniformly in $k$ and $\ell$,
$$
\E(X_{k,\ell}^2) = \alpha_k \rho_k^{-\ell} + O(9^\ell k).
$$
In particular, if $k, \ell \rightarrow \infty$ in
such a way $\ell=o(2^k)$, then  
$$
\Var(X_{k,\ell}) \sim \E(X_{k,\ell}^2) \sim \frac 3 2\, 2^{(k+1)\ell},
$$
which is much larger than
$\E(X_{k,\ell})^2=(k+1)^{2\ell}$. By~\eqref{av-le}, the
variance is thus exponential in the average length of a (uniform) \NN\EE\SS-walk
crossing the $k\times \ell$ rectangle.   
\end{Proposition}
\begin{proof}
We proceed in four steps. We first express the series $M_k(x)$ in
terms of an algebraic series $S$, as was done for the enumerative
problem in Proposition~\ref{prop:sol-enum}.  Then, we study the analytic properties
of $S$. We use these properties to prove that the denominator $G_k$ of
$M_k$ is real-rooted, with one positive zero $\rho_k$ and all the
other zeroes below $-1/9$. We finally apply Cauchy's formula to
extract the $\ell$th coefficient of $M_k(x)$, which is
$\E(X_{k,\ell}^2)$.

\medskip
\noindent{\bf Step 1. The expression of $M_k$}
\\
By Proposition~\ref{prop:rat},
$$
M_k(x)=2x \frac{N_k}{G_k},
$$
where the polynomials $N_k$ and $G_k$ can be described either by
induction, or, after performing the change of variables  $x\rightarrow 3x$, $a\rightarrow 2x$,
$y\rightarrow 2$ and $b\rightarrow 1$ in~\eqref{G-N-S}, by\footnote{From now on, we
  carefully avoid the notation $\bS:=1/S$, since we will soon be doing
  complex analysis.}
\begin{eqnarray}
  G_{2k}&=&-\frac{2^k}{3}\left( P(1/S) S^k +P(S)S^{-k}\right),\nonumber
\\
G_{2k+1}&=& -\frac{2^k}{1+S}\left( P(1/S) S^{k+1}
  +P(S)S^{-k}\right), \label{G-N-S-simple}
\\
N_{2k}&=&-\frac{2^k}{3}\left( Q(1/S) S^k +Q(S)S^{-k}-3
  \frac{S^{k}-S^{-k+1}}{S-1}\right),\nonumber
\\
N_{2k+1}&=&-\frac{2^k}{1+S}\left( Q(1/S) S^{k+1}
  +Q(S)S^{-k}-3\frac{S^{k}-S^{-k}}{1-1/S}\right),\nonumber
\end{eqnarray}
where $S$ and $1/S$ are the two power series in $x$ satisfying
\beq\label{ker-s}
S+\frac 1 S= \frac{5+9x}2,
\eeq
or equivalently,
\beq\label{xS}
x=-\frac 1 9 (2S-1)\left( 2/ S-1\right).
\eeq
The polynomials $P(s)$ and $Q(s)$ are 
$$
P(s) = 1+2x-2s(1-x) \quad \hbox{and} \quad
Q(s)=-1-s,
$$
so that, in view of~\eqref{xS},
$$
P(S)= \frac{(2S-1)(2S^2-11S-4)}{9S} \quad \hbox{and} \quad 
P(1/S)=\frac{(2-S)(2-11S-4S^2)}{9S^2}.
$$
It also follows from~\eqref{prob-enum} and~\eqref{Tk1-sol} that
\beq\label{MS}
M_k(x)=\frac {2x} {P(\bS)S^k+P(S)} \left(Q(\bS)S^k+Q(S)
-3\,\frac{S^k-S}{S-1} \right).
\eeq

\medskip
\noindent{\bf Step 2. The series $S(x)$}\\
From now on, we denote by $S$ the root of~\eqref{ker-s} that has
constant term 1/2:
\beq\label{S-def}
S= \frac{5+9x - 3\sqrt{(1+x)(1+9x)}}4.
\eeq
\begin{Lemma}\label{lem:S}
  The series $S$ has radius of convergence $1/9$, and admits an
  analytic continuation, still denoted by $S$, in
  $\cs\setminus[-1,-1/9]$. In this domain, $S$ never vanishes, and its
  modulus is less than $1$. 
\end{Lemma}
\begin{proof}
  The existence of an analytic continuation  follows from  basic complex
analysis. If $x=u+iv$, the imaginary part of the discriminant $(1+x)(1+9x)$ reads
$2v(5+9u)$. Using the principal determination of the square root, the
analytic continuation of  $S$ is given by~\eqref{S-def} when $\Re(x) \ge -5/9$, and
otherwise by
$$
S=\frac{5+9x +3\sqrt{(1+x)(1+9x)}}4.
$$
 A plot of the modulus of $S$ is shown in Figure~\ref{fig:S} (left).
\begin{figure}[htb]
\includegraphics [scale=0.3]{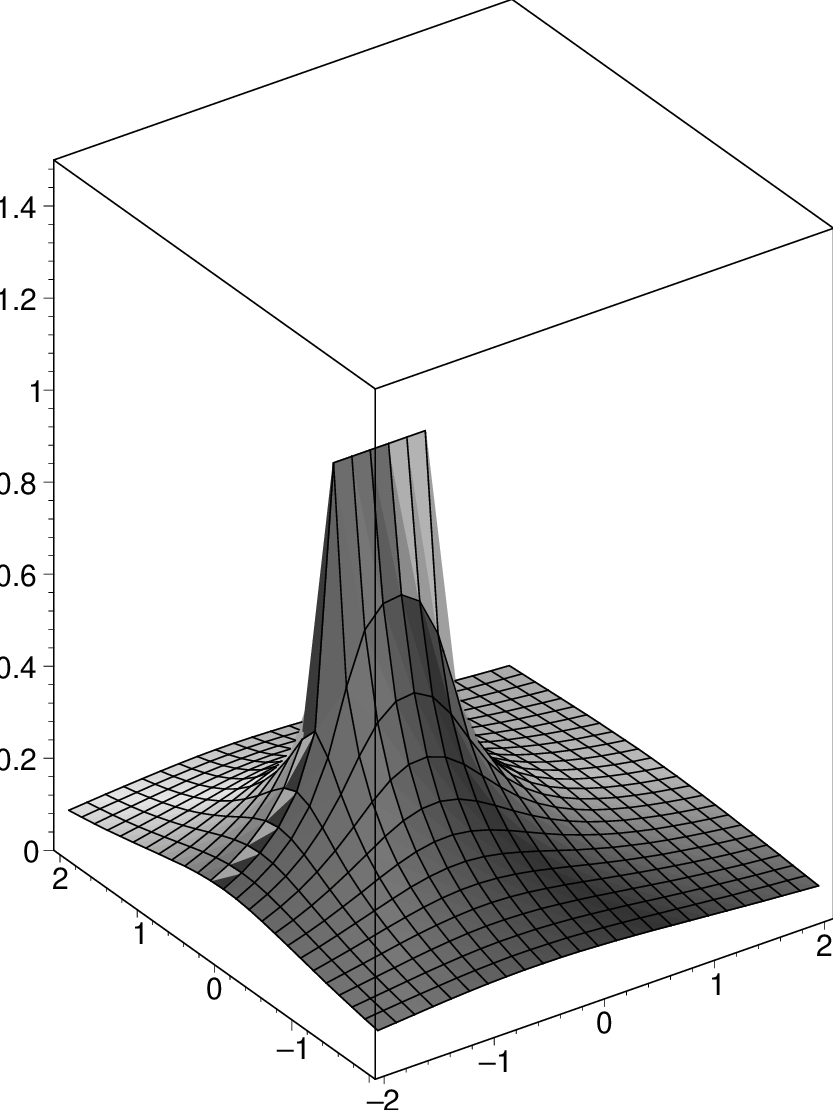}
\hskip 30mm
\includegraphics [scale=0.3]{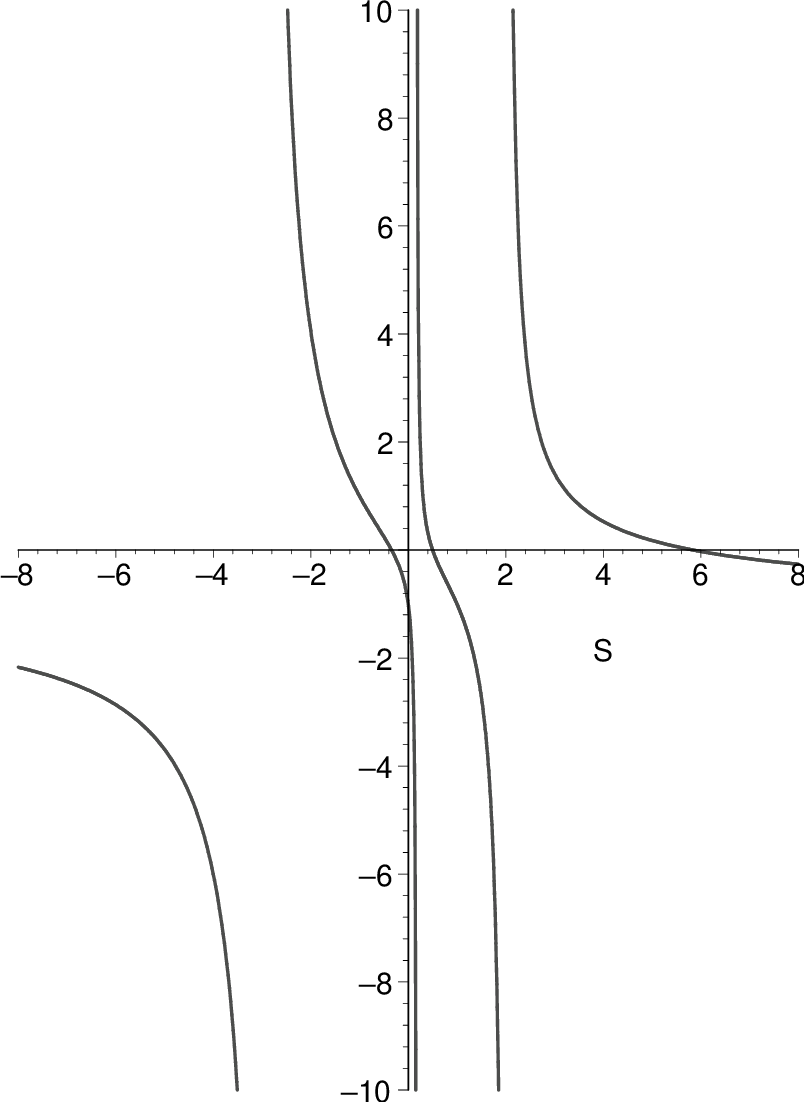}
\caption{Left: A plot of the modulus of  $S$, showing the  cut on the
  interval $[-1,-1/9]$. Right: The function $R(s)$.}
\label{fig:S}
\end{figure}
\end{proof}

\medskip
\noindent {\bf Step 3. The roots of $G_k$}
\begin{Lemma}\label{lem:zeroes}
  For $k\ge 1$, the denominator $G_k$ of the series $M_k$ is
  real-rooted. It has
 a unique positive zero $\rho_k$, which, as $k\rightarrow \infty$, admits the
  expansion~\eqref{rho-exp}. The other zeroes are smaller than $-1/9$.
As $x$ approaches $\rho_k$, the series $M_k$  behaves likes
$\alpha_k/(1-x/\rho_k)$, where $\alpha_k$ admits the
expansion~\eqref{residue-exp}. 
\end{Lemma}
We could use Rouch\'e's theorem to
    prove that, for any $\varepsilon >0$, the polynomial $G_k$ has only
    one root of modulus less than $1/9-\varepsilon$ for $k$ large
    enough, but the above statement is more precise.
\begin{proof}
 The case $k=1$ being trivial ($G_1=1-4x$), we focus on the case $k\ge
2$.  By Proposition~\ref{prop:rat}, the denominator $G_k$ has degree $\lceil
  \frac{k+1}2\rceil$. The expressions~\eqref{G-N-S-simple} are
  symmetric in $S$ and $1/S$, and thus hold for any determination of
  $S$, and thus for any $x \in \cs$, including in the cut
  $[-1,-1/9]$. They show that $G_k(x)=0$ if and only if $S\not = -1$ and
$$
S^{k-1}= 
-\frac{(2S-1)(2S^2-11S-4)}{(2-S)(2-11S-4S^2)}.
$$
Conversely, if $s \in \cs\setminus\{-1\}$ is a root of 
\beq\label{s-eq}
s^{k-1}= 
-\frac{(2s-1)(2s^2-11s-4)}{(2-s)(2-11s-4s^2)}:=R(s),
\eeq
then 
\beq\label{x-s}
x:=-\frac 1 9 (2s-1)\left( 2 /{s}-1\right)
\eeq
is a root of $G_k$. Observe that in this case, $1/s$ is also a root
of~\eqref{s-eq}, and gives rise to the same root $x$ of
$G_k$. Conversely, if two distinct
roots $s_0$ and $s_1$ of~\eqref{s-eq} give rise to the same root  of
$G_k$, then $s_1=1/s_0$. 

It is easy to relate the positions of $s$ and $x$ in the complex
plane. By writing $s=u+iv$, one finds that $x$ is real if and only if
$s$ is real or has modulus~1. If $s=e^{i\theta}$, then
$x=(4 \cos \theta-5)/9$ lies in $[-1,-1/9]$.   If $s$ is real and
negative,  then $x\le -1$, and the equality holds if and only if
$s=-1$. If $s$ is real and positive, then $x\ge -1/9$, and $x>0$ if
and only if $s\not \in [1/2,2]$.  

\medskip
Since we want to prove that $G_k$ is real-rooted,  let us study
the roots of~\eqref{s-eq}, distinct from $-1$, that are real or have
modulus 1.
We will prove that~\eqref{s-eq} has 
\begin{itemize}
\item  two pairs $\{s,1/s\}$ of  real zeroes  distinct from
$-1,$ one positive outside of  $[1/2,2]$, and one negative, 
\item   $\lceil \frac{k-3} 2\rceil$ pairs of zeroes distinct from $-1$
  on the unit circle. 
\end{itemize}
Consequently, $G_k$ has two real zeroes outside the interval
$[-1,-1/9]$, one positive, one less than $-1$, and $\lceil \frac{k-3}
2\rceil$ zeroes in $[-1,-1/9]$. In particular, it is real rooted.

\medskip
\noindent {\bf Real roots of~\eqref{s-eq}. } An elementary study of the function
$R(s)$, for $s \in \rs$, reveals that it consists of 4 decreasing
branches, shown in Figure~\ref{fig:S} (right), with vertical asymptotes at 
$$
s=-\frac{11+3\sqrt {17}}8\simeq -2.9,
\quad 
s= \frac{-11+3\sqrt {17}}8\simeq 0.17,
\quad \hbox{and} \quad s=2.
$$
The branches intersect the $s$-axis at the reciprocals of these three values (and in particular at $1/2$).
Thus in $\rs^+$, the equation $s^{k-1}=R(s)$ has two roots, one below
$1/2$ and the other beyond $2$, which are
necessarily the reciprocal of each other. The smallest of these
increases to $1/2$ as $k$ increases: thus the corresponding value of
$x$ decreases to $0$ as $k$ increases. We denote by $\rho_k$ this root
of $G_k$. 

If $k$ is even,   the equation $s^{k-1}=R(s)$
has also  two roots in $\rs^-$. If $k\ge 3$ is odd, the curve
$s\mapsto s^{k-1}$
intersects the second branch of $R(s)$ at $s=-1$, but also somewhere between $s=0$ and
$s=-0.508\ldots$ (which is the root obtained for $k=3$).
The latter intersection point  gives rise to a root of $G_k$ smaller than $-1$.

 The rest of the argument will show that all other roots of $G_k$ lie
 in $[-1,-1/9]$.

\medskip
\noindent {\bf  Roots of~\eqref{s-eq} of modulus 1. } We first observe
that, if $s$ has modulus 1, then the same holds for $R(s)$. More
precisely, if $s=e^{i\theta}$, then $R(s)=e^{i\phi}$ with
\begin{eqnarray*}
  \cos \phi &=& -{\frac {56+321\,\cos \theta -336\,  \cos ^{2} \theta +128
\,  \cos  ^{3}\theta }{ \left(5- 4\,\cos \theta \right) 
\left( 157 +44\,\cos \theta-32\,  \cos ^{2} \theta \right) }},
\\
\sin \phi &=&-27\,{\frac { \left(29- 16\,\cos \theta \right) \sin
 \theta}{ \left(5- 4\,\cos \theta
 \right)  
\left( 157 +44\,\cos \theta-32\,  \cos^{2} \theta  \right) }}.
\end{eqnarray*}
Plots of $\cos \phi$ and $\sin \phi$ as a function of $\theta$ are
shown in Figure~\ref{fig:R}. For $s=e^{i\theta}$, Eq.~\eqref{s-eq} is
equivalent to $\cos ((k-1)\theta)= \cos \phi$ and  $\sin
((k-1)\theta)= \sin \phi$. Given that $1/s= e^{-i\theta}$, we can
focus on solutions such that $\theta\in [0,\pi]$. The oscillations of
$\cos ((k-1)\theta)$ in this interval imply that the equation $\cos ((k-1)\theta)=
\cos \phi$ admits at least one solution in each interval $(
  \frac{m-1}{k-1} \pi, \frac{m}{k-1} \pi]$, for $1\le m \le k-1$. For
    each solution, $\sin((k-1)\theta)=\pm \sin \phi$, and the plot of
    $\sin \phi$ in Figure~\ref{fig:R} shows that $\sin((k-1)\theta)=
    \sin \phi$ if and only if $\sin((k-1)\theta)\le 0$, that is, if $m$
    is even. We finally note that, when $k$ is odd, one solution is
    $\theta=\pi$, giving $s=-1$, which we want to exclude. 

This discussion shows that~\eqref{s-eq} has at least $\lceil
\frac{k-3}2\rceil $ solutions $s\not
= -1$ with $\Im(s) > 0$ on the unit circle.  They give rise to as
many roots of $G_k$ in the interval $[-1,-1/9]$. With the two real
roots of $G_k$ found previously outside this interval, this gives a total of $\lceil
\frac{k+1}2\rceil $ roots, which coincides with the degree of
$G_k$. Hence $G_k$ is real rooted, with one positive root $\rho_k$, and
the others smaller than $-1/9$.

\begin{figure}[htb]
\includegraphics [scale=0.3]{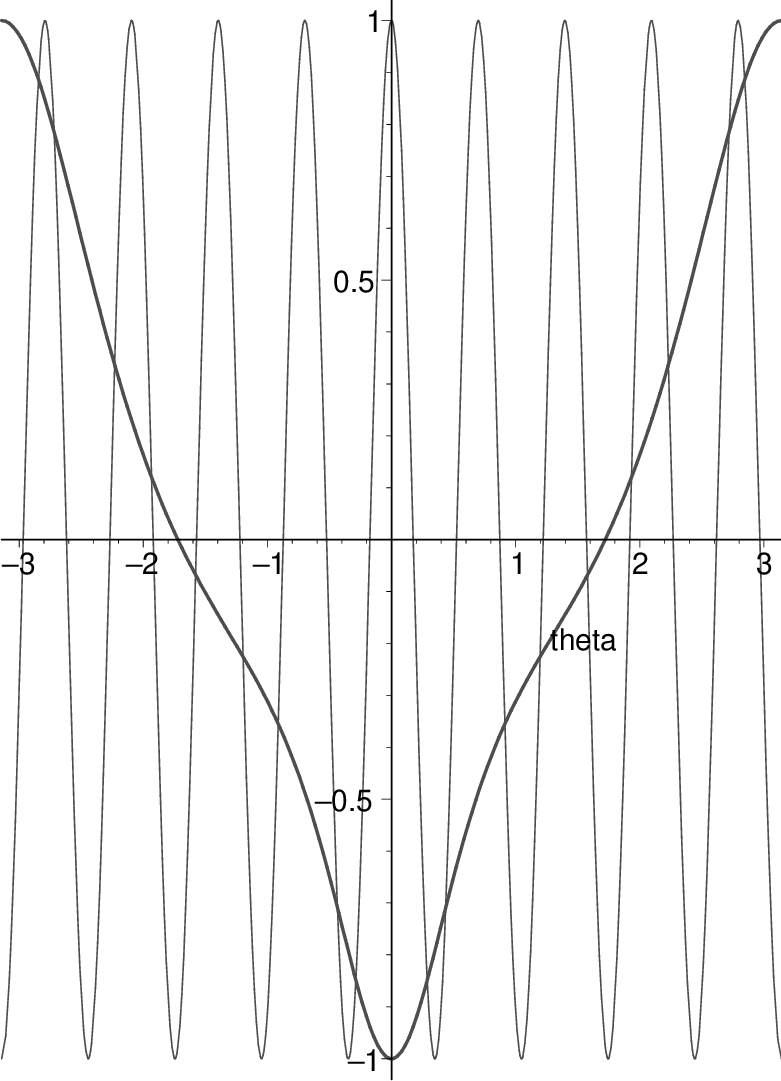}
\hskip 10mm
\includegraphics [scale=0.3]{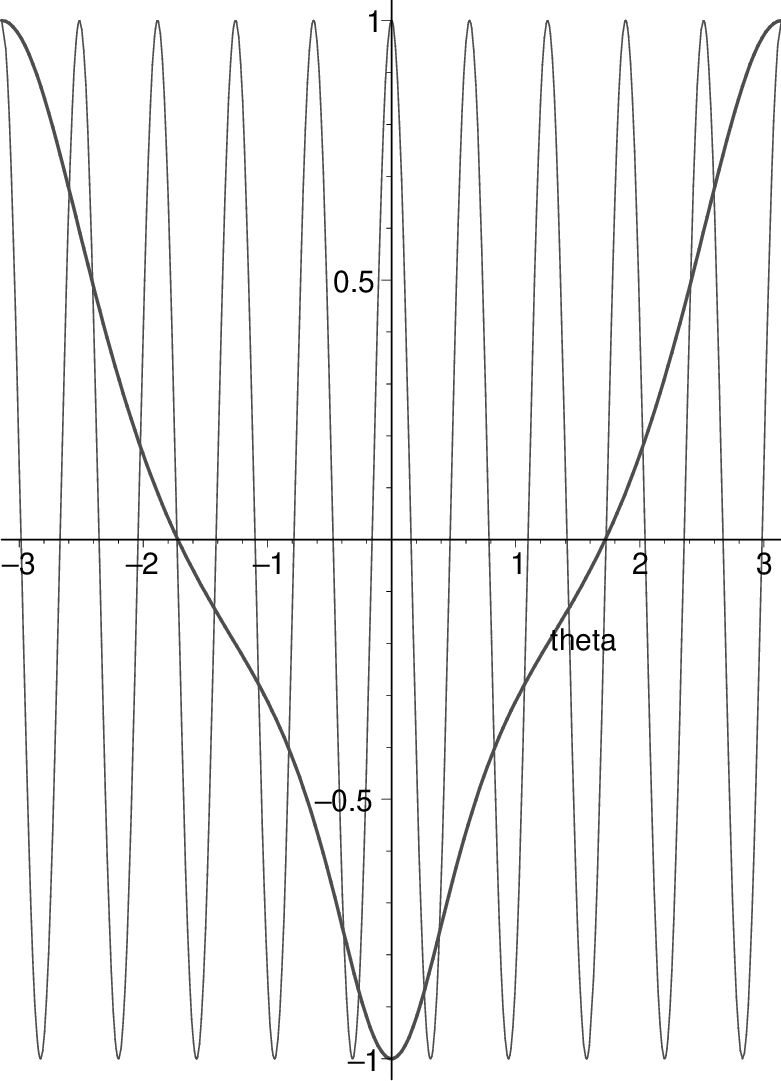}
\hskip 10mm
\includegraphics [scale=0.3]{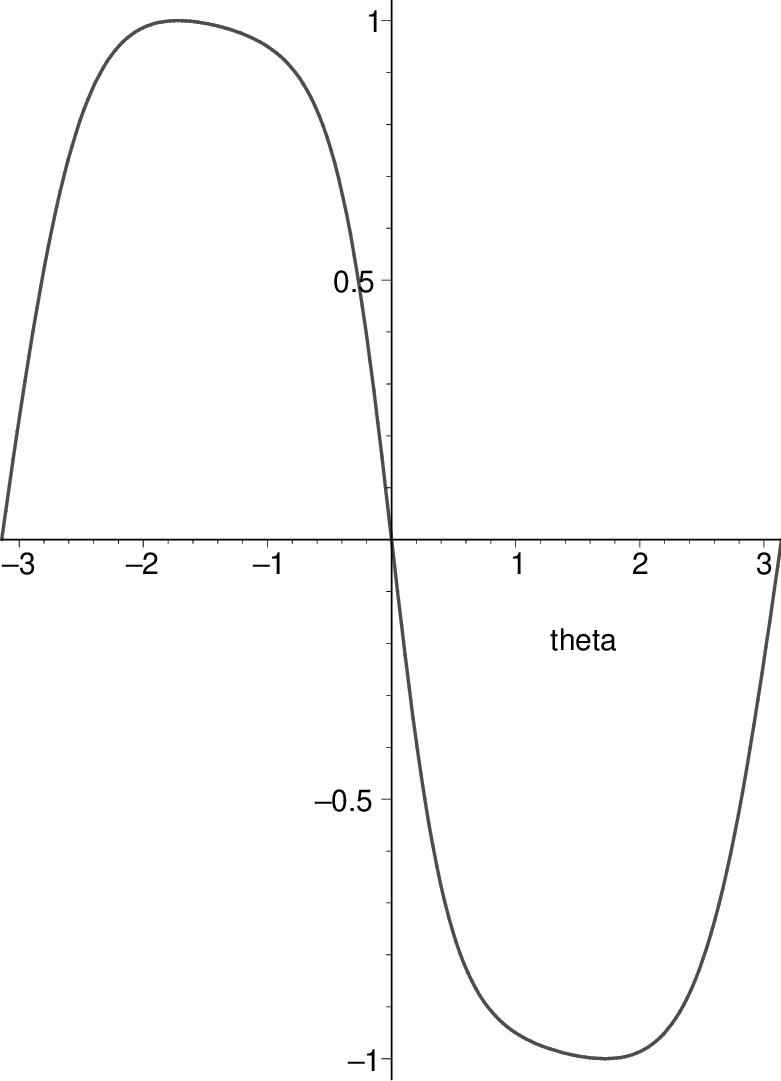}
\caption{Plots of $\cos \phi$ (thick curve) and $\cos((k-1)\theta)$
  against $\theta$, for $\theta \in [-\pi,\pi]$, when $k=10$ (left) and
  $k=11$ (middle). Right: Plot of $\sin \phi$.}
\label{fig:R}
\end{figure}

\medskip
In remains to  obtain an expansion of $\rho_k$ as $k$ grows. We first
work out an expansion of the solution of~\eqref{s-eq} found around
$1/2$ (by bootstrapping  in~\eqref{s-eq}):
$$
s= {\frac {1}{2}}-{\frac {3}{2^{k+2}}}-{\frac {3}{4^{k+2}}}
+\frac{36\,k+ 27} {4.8^{k+1}}
- \frac { {27\cdot 32}
\,{k}^{2}-9\cdot 16\,k- 717 } {16^{k+2}}
+ O\left(\frac{k^3}{32^k}\right).
$$
This translates into the expansion of $\rho_k$ using~\eqref{x-s}.
The singular behaviour of $M_k$ is then  derived from~\eqref{MS}. 
\end{proof}

\medskip
\noindent{\bf Step 4. Conclusion}\\
By Lemma~\ref{lem:zeroes} and Cauchy's formula,
\beq\label{cauchy}
[x^\ell] \left(M_k(x) - \frac{\alpha_k}{1-x/\rho_k}\right)
=
\frac 1 {2i\pi} \int_\cC\left(M_k(x) - \frac{\alpha_k}{1-x/\rho_k}\right)
\frac{dx}{x^{\ell+1}},
\eeq
where $\cC$ is the circle of radius $1/9$ centered at the origin. We
will prove that there exists a constant $C$ such that for all $k$ and
$x\in \cC$, 
$$
\left|M_k(x) - \frac{\alpha_k}{1-x/\rho_k}\right|\le Ck,
$$
so that~\eqref{cauchy} implies
$$
[x^\ell] M_k(x) = \E(X_{k,\ell} ^2)=\alpha_k\rho_k^{-\ell} + O(9^\ell k),
$$
as stated in Proposition~\ref{prop:asympt}.

It follows from~\eqref{rho-exp} and~\eqref{residue-exp} that
$\frac{\alpha_k}{1-x/\rho_k}$ is bounded uniformly in $k$ and $x\in
\cC$, so that we only need to prove that $M_k(x) =O(k)$, uniformly in
$x \in \cC$. By Lemma~\ref{lem:S}, for any $x \in \cC\setminus\{-1/9\}$,
$|S(x)|<1$. Moreover, $S(x)\rightarrow 1$ as $x \rightarrow
-1/9$. Recall that $M_k(x)= 2xN_k/G_k$. By~\eqref{G-N-S-simple}, $N_k=2^{k/2}
O(k)$, so that it suffices to prove that $G_k/2^{k/2}$ is bounded away
from 0, uniformly in $k$ and $x \in \cC$. Since $1+S$ and $S^k$ are
uniformly bounded, and $P(1/S)$ is bounded away from 0, this is
equivalent to
$$ 
\inf_{k, x\in \cC} \left|S^{k} + \frac{P(S)}{P(1/S)}\right| >0.
$$
 By the proof of Lemma~\ref{lem:zeroes}, $S^{k} + \frac{P(S)}{P(1/S)}$ does
not vanish on $\cC$. Hence it suffices to prove that
$$
\liminf_k \inf_{x\in \cC} \left|S^{k} + \frac{P(S)}{P(1/S)}\right| >0.
$$

Let us write $x=-e^{\pm i \theta}/9$, with $\theta\in [0,\pi]$. Then,
as $\theta\rightarrow 0$,
\begin{eqnarray}
  S(x)&=& 1 - \frac{1 } 2 (1\mp i) \sqrt \theta+ O(\theta), 
\label{S-exp}\\
|S(x)|&=& 1 - \frac 1  2 \sqrt \theta+ O(\theta),
\label{S-module-exp}\\
\frac{P(S)}{P(1/S)} &=& 1 - \frac{20}{13} (1\mp i)  \sqrt \theta+
O(\theta).
\label{ratio-exp}
\end{eqnarray}
We split then interval $[0,\pi]$, to which $\theta$ belongs, in three
parts.

\noindent 
$\bullet$ When $\sqrt \theta \le \pi/(2k)$, there holds, uniformly in
$\theta$,
$$
S(x)^k = \exp\left( -k(1\mp i)\sqrt \theta /2\right) + O(1/k).
$$
In particular,
\begin{eqnarray*}
  \Re( S^k) &=& \exp( -k\sqrt \theta/2) \cos (k\sqrt \theta/2) + O(1/k)\\
&\ge&  \exp(-\pi/4) /\sqrt 2+ O(1/k).
\end{eqnarray*}
Moreover,
$$
\Re \left(\frac{P(S)}{P(1/S)} \right) = 1 + O(\sqrt \theta) = 1 +
O(1/k),
$$
uniformly in $\theta$.
Hence 
$$
\Re\left( S^k +\frac{P(S)}{P(1/S)} \right) = 1+ \exp(-\pi/4) /\sqrt 2+
O(1/k),
$$
and 
$$
\liminf_k \inf_{\sqrt \theta \le \pi/(2k)} \left|S^{k} + \frac{P(S)}{P(1/S)}\right| >0.
$$

\noindent 
$\bullet$ Let $\varepsilon >0$ be such that, for $\sqrt \theta
<\varepsilon$,
$$
|S(x)| \le 1 -\sqrt \theta /4 \quad \hbox{ and } \quad
\left|\frac{P(S)}{P(1/S)} \right|>0.9.
$$
Such an $\varepsilon$ exists  in view of~\eqref{S-module-exp}
and~\eqref{ratio-exp}. For $\pi/(2k) \le \sqrt \theta \le \varepsilon$,
$$
|S|^k \le (1-\sqrt \theta /4)^k \le (1-\pi/(8k))^k=
\exp(-\pi/8)+O(1/k),
$$
so that 
$$
\left|S^{k} + \frac{P(S)}{P(1/S)}\right|\ge 0.9-\exp(-\pi/8)+O(1/k)\ge
0.2+O(1/k).
$$
Hence
$$
\liminf_k \inf_{\pi/(2k)\le \sqrt \theta \le \varepsilon} \left|S^{k} + \frac{P(S)}{P(1/S)}\right| >0.
$$

\noindent$\bullet$
Finally, when $\sqrt \theta \ge \varepsilon$, then $|S|<1$ is bounded away
from 1, uniformly in $\theta$. Thus, if 
$$
\liminf_k \inf_{ \sqrt \theta \ge \varepsilon} \left|S^{k} + \frac{P(S)}{P(1/S)}\right| =0,
$$
there would exist an $x \in \cC$ such that $P(S(x))=0$. But this only
happens when $x=0$ or $x=(-1\pm \sqrt{17})/4$, and none of these
values lies on the circle $\cC$. 

This concludes the proof of Proposition~\ref{prop:asympt}.
\end{proof}

\section{Back to Knuth's algorithm}
\label{sec:knuth}
Let us go back to Knuth's original algorithm, described at the
beginning of the paper. Recall that $\E(X_k)$ is the number of SAWs
crossing a square of side $k$, and that $\E(X_k^2)$ is the sum of the
reciprocals of the probabilities of these walks.
\begin{Proposition}\label{prop:general}
  Denote $c(k)=\E(X_k)$ and  $d(k)=\E(X_k^2)$. There exist two positive
constants $\lambda$ and $\beta$ such that 
$$
\E(X_k)^{1/k^2} \rightarrow \lambda \quad \hbox{and} \quad \E(X_k^2)^{1/k^2} \rightarrow \beta.
$$ 
Of course, $\beta\ge \lambda^2$.
Moreover,
\beq\label{lambda-bound}
\lambda= \sup_k\  c(k)^{1/(k+1)^2}
\eeq
and
\beq\label{beta-bound}
\beta= \sup_k\  (\sqrt 2\, d(k))^{1/(k+1)^2}.
\eeq
\end{Proposition}
As discussed at the end of the introduction, there is a hope to
combine~\eqref{beta-bound} and known upper bounds on $\la$ to prove
that $\beta >\lambda ^2$, in which case one could conclude that the
relative variance of $X_k$ grows as $\kappa^{k^2}$, with
$\kappa=\beta/\lambda^2>1$. 
\begin{proof}
  As can be expected, these results follow from a
  super-multiplicativity argument. The existence of $\lambda$ was
  established for the first time in~\cite{abbott},
  and~\eqref{lambda-bound} (which allows to produce lower bounds on
  $\la$) appears in~\cite{mbm-crossing}. We repeat the argument, 
  because it applies almost verbatim to the numbers $d(k)$.

Define $\lambda:= \limsup_k c(k)^{1/k ^2}$. Then $\lambda$ is finite,
because there are only a quadratic number of edges in the $k\times k$
square, and a walk is determined by the set of its edges. Let
$\varepsilon >0$. We will prove that
\beq\label{liminf}
\liminf c(K)^{1/K^2}\ge \lambda-\varepsilon,
\eeq
which implies that $\lambda$ is actually the limit of $c(k)^{1/k^2}$.

Let $k>0$ be such that $c(k)^{1/(k+1)^2}>\lambda-\varepsilon$. Let
$K\ge k$,
and let $n$ be maximal so that
$$
(k+1)(2n+1)-1 \le K.
$$
This implies in particular that $K<(k+1)(2n+3)$.
In the $K\times K$ square, put $(2n+1)^2$ smaller squares of side $k$,
as shown in Figure~\ref{fig:mult}. In each smaller square, choose a SAW
that crosses it, and build from this collection of short walks a long walk
crossing the larger square, as shown in the figure. This construction implies
$$
c(K) \ge c(k)^{(2n+1)^2}.
$$
Thus
$$
c(K)^{1/K^2} \ge c(K)^{1/((k+1)^2(2n+3)^2)} \ge
\left(c(k)^{1/(k+1)^2}\right)
^{(2n+1)^2/(2n+3)^2}\ge (\lambda-\varepsilon)^{(2n+1)^2/(2n+3)^2}.
$$
 Taking the $\liminf$ on $K$ boils down to taking the $\liminf$ on $n$ and
 gives~\eqref{liminf}. 
The bound~\eqref{lambda-bound} also follows from the above inequalities.

\begin{figure}[htb]
  \begin{center}
   \scalebox{0.8}{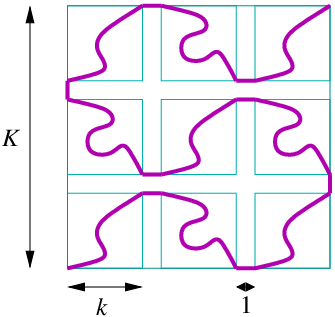}
  \end{center}
  \caption{Super-multiplicativity for SAWs crossing a square}
\label{fig:mult}
  \end{figure}

\medskip
Let us now consider the numbers $d(k)$. Again, $\beta:= \limsup
d(k)^{1/k^2}$ is finite, because
$$
d(k)=\sum_{w\in \cW_k} \frac 1 {p(w)} \le \sum_{w\in \cW_k} 3^{|w|} \le
 3 ^{O(k^2)}c(k),
$$
where $|w|$ denotes the length of $w$.
Now return to Figure~\ref{fig:mult}.  Denote by $w_1, w_2, \ldots$ the
short walks, and by $w$ the long one. It is clear for the sampling
algorithm that
$$
\frac 1 {p(w)} \ge \prod_{i=1}^{(2n+1)^2} \frac 1 {p(w_i)}.
$$
It follows that 
$$
d(K) \ge d(k)^{(2n+1)^2},
$$
from which one can prove, as above, that $\beta= \lim d(k)^{1/k^2}$. The above
bound on $d(K)$ can actually be improved: in every row of $(2n+1)$ small
squares, except maybe the top one, $n$ of the horizontal steps added
between the small squares have probability $1/2$ or $1/3$. Hence
$$
d(K) \ge 2^{2n^2}d(k)^{(2n+1)^2},
$$
and the lower bound~\eqref{beta-bound} now follows.
\end{proof}

\section{Final comments}
\subsection{Unconfined walks}
\label{sec:unconfined}
Knuth designed his algorithm to sample SAWs crossing a square of side $k$, but
other authors have used similar ideas to sample unconfined SAWs of fixed
 length $n$. For example, the classical Rosenbluth algorithm~\cite{rosenbluth} generates
 general SAWs step by step, by taking at each time, uniformly at random, one of
 the steps that preserves self-avoidance. If at some point no such
 step is available, and the walk has not reached length $n$, the algorithm restarts
 from scratch. This rejection step is avoided if
 one only samples \emm untrapped, self-avoiding walks, that is, 
walks that can be extended into a SAW of infinite length\footnote{This
  notion of untrapped walks differs from the one
  in~\cite{aleks-thomas}, where a walk is said to be untrapped as
soon as it can be extended by one step.}. (We describe
 in Section~\ref{sec:trap} a simple procedure that detects if a
new step traps the walk, which is also useful for implementing Knuth's
algorithm.)  A
 recent numerical study, using a refinement of the above algorithm,
 suggests that the asymptotic properties  of untrapped walks are
 similar to those of general SAWs, in terms of number and end-to-end
 distance~\cite{chan-rechni}. 

For these algorithms, the quality of the cardinality estimator is still related to
the variance of the random variable $X_n$ equal to the reciprocal of
the probability of the generated walk.
As already mentioned, the variance of the Rosenbluth estimator is
predicted to be exponential in $n$~\cite{batoulis-kremer}. We do not
know of any similar study for untrapped walks. It is easy to determine
the variance of $X_n$ for the Rosenbluth algorithm restricted to directed or partially directed walks. Our
results are summarized in the following table. In particular, for
partially directed walks the relative variance is found to be exponential in
$n$.

\bigskip
\begin{tabular}{c|lc|c|c}
&& North and East & North, East and South & all four steps \\
\hline
confined &number&   $4^ k/\sqrt k$  & $(k+1)^k$ & $\lambda^{k^2}$ \\
to $k\times k$ &rel. var.& $\sqrt k$ \ \ \cite{bassetti-diaconis} & $2^{k(k+1)}/(k+1)^k $ (Prop.~\ref{prop:asympt}) &
$\kappa^{k^2}$ ? (Prop.~\ref{prop:general}) \\
 &av. length&  $k$ & $k^2$ & $k^2$\ \  \cite{madras}\\
\hline
&&&&\\
unconfined, & number&$2^n$ &  $(1+\sqrt 2)^n$ & $(2.64...)^n$ \cite{madras-slade}\\
$n$ steps &rel. var. &0 & $ (6/(1+\sqrt 2))^n$ & $\alpha^n$ (pred.~\cite{batoulis-kremer})
\end{tabular}

\medskip

\subsection{Kinetic distributions}
It is also interesting to study the asymptotic properties of
 SAWs chosen according to the non-uniform (but very natural)
``kinetic'' distribution that results from importance sampling. These
 properties may be  different from those observed  in the
 uniform case. For instance, 
one can expect the average end-to-end 
distance of kinetic unconfined SAWs to be smaller than $n^{3/4}$, because ``compact'' walks in which few
steps are eligible at each time have a higher probability than more
spread-out walks. In fact, the kinetic end-to-end distance is conjectured~\cite{majid} to
grow like $n^{2/3}$. (For unconfined partially directed
walks, however, the end-to-end distance is easily shown to be linear,
both for the uniform and the kinetic model.) Figure~\ref{fig:unconfined} shows a random (untrapped)
SAW that we generated by importance sampling and a (quasi-)uniform SAW
generated using a pivot algorithm~\cite{kennedy-pivot-saw}. 

\begin{figure}[htb]
\includegraphics[scale=0.3]{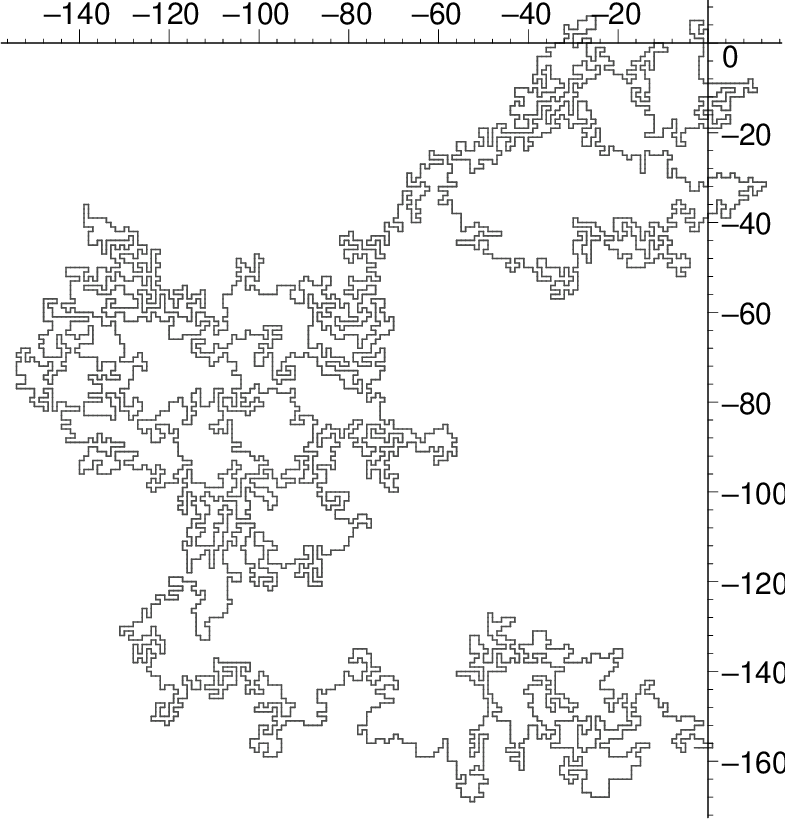} 
\hskip 20mm \includegraphics[height=4.5cm,width=3cm]{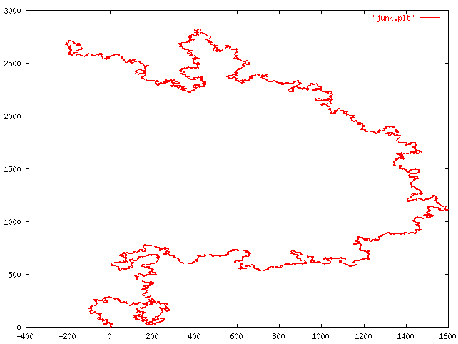} 
\caption{A random untrapped SAW of length 5000 obtained via importance
  sampling (left), and a quasi-uniform SAW of length 20000 (right).}
\label{fig:unconfined}
\end{figure}

\subsection{When does a walk get trapped?}
\label{sec:trap}
One important feature of Knuth's algorithm, and of its adaptation to
untrapped SAWs discussed in Section~\ref{sec:unconfined}, is that one never
appends a step that would trap the walk. Since Knuth does not explain
in his paper how he detects trapping, let us describe the method
we used. One obvious case of trapping in Knuth's algorithm is when
the walk reaches  the boundary of the square, and moves towards the
origin. In all other trapping situations, the walk would have been
trapped as well in the unconfined setting, so we focus on the trapping of
unconfined walks. 

Let $w$ be an untrapped SAW of length $n$, ending at vertex $v_n=(i,j)$, and,
say, with a \WW\ step. There are, up to obvious symmetries, exactly three situations when adding
a new step to $w$ creates a trapped walk:
\begin{itemize}
\item the vertex $v=(i-1,j)$ belongs to $w$, one appends a \NN\ step
  to $w$ and the portion of $w$ going from $v$ to $v_n$ has \emm
  winding number,  $-2\pi$,
\item the vertex $v=(i-1,j+1)$ belongs to $w$, one appends a \NN\ step
  to $w$ and the portion of $w$ going from $v$ to $v_n$ has 
  winding number  $-2\pi$,
\item the vertex $v=(i-1,j+1)$ belongs to $w$, one appends a \WW\ or
  \SS\ step
  to $w$ and the portion of $w$ going from $v$ to $v_n$ has 
  winding number  $2\pi$.
\end{itemize}

\begin{figure}[htb]
\includegraphics[scale=0.8]{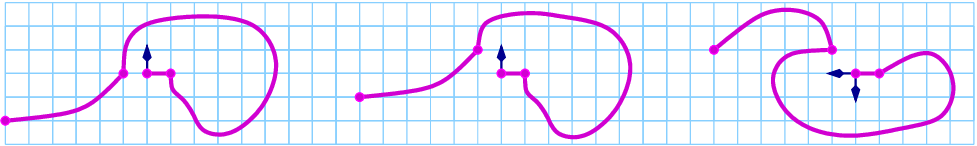} 
\vskip -3mm\caption{How a walk gets trapped.}
\label{fig:trapped}
\end{figure}

These three cases are depicted in Figure~\ref{fig:trapped}. When computing the
winding number, we add a half-edge pointing from the East to $v$ (Figure~\ref{fig:winding}). The
winding number is then the difference between the number of left turns
and the number of right turns, multiplied by $\pi/2$.

\begin{figure}[htb]
  \begin{center}
   \scalebox{0.6}{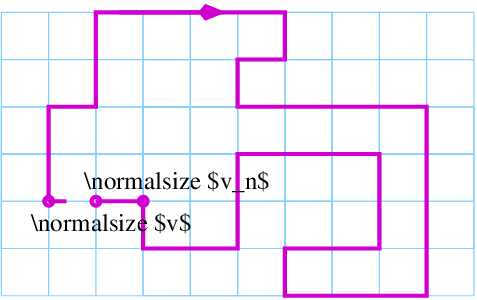}
  \end{center}
\vskip -3mm  \caption{The winding number between $v$ and $v_n$ is $-2\pi$.}
\label{fig:winding}
  \end{figure}

\medskip
\noindent
{\bf Acknowledgements.} I am very grateful to Persi Diaconis, who
first told me about this question, and then  sent helpful
suggestions on this manuscript and pointed to interesting related
papers.
I also thank the referee of a first version of this paper for his very
thorough report and helpful references.
\bibliographystyle{plain}
\bibliography{sampling.bib}

\end{document}